\newcommand{\Aut}{\operatorname{Aut}}
\newcommand{\G}{\mathcal{G}}
\newcommand{\Graphs}{\mathcal{G}raphs}
\newcommand{\Cay}{\operatorname{Cay}}
\newcommand{\IRel}{{I\mathcal{R}el}}
\newcommand{\Sym}{\operatorname{Sym}}
\newcommand{\id}{\operatorname{id}}
\newtheorem{theorem}{Theorem}[section]
\newtheorem{lemma}[theorem]{Lemma}
\newtheorem{corollary}[theorem]{Corollary}
\theoremstyle{definition}
	\theoremstyle{definition}
	\newtheorem{rem&notation}[theorem]{Remark and notation}
\newtheorem{remark}[theorem]{Remark}
\theoremstyle{definition}
\newtheorem{definition}[theorem]{Definition}
\numberwithin{equation}{section}
\title[Representability of permutation representations on coalgebras]{Representability of permutation representations on coalgebras and the isomorphism problem}
\author{Cristina ~Costoya}
\address[C.~Costoya]{Departamento de Computaci\'on, \'Alxebra,
Universidade da Coru{\~n}a, Campus de Elvi{\~n}a, 15071  A Coru{\~n}a, Spain.}
\email[C.~Costoya]{cristina.costoya@udc.es}
\author{David ~M\'endez}
\address[D.~M\'endez]{
	Departamento de {\'A}lgebra, Geometr{\'\i}a y Topolog{\'\i}a,
	Universidad de M{\'a}\-la\-ga, Campus de Teatinos, 29071 M{\'a}laga,
	Spain.}
\email[A.~Viruel]{david.mendez@uma.es}
\author{Antonio ~Viruel}
\address[A.~Viruel]{
Departamento de {\'A}lgebra, Geometr{\'\i}a y Topolog{\'\i}a,
Universidad de M{\'a}\-la\-ga, Campus de Teatinos, 29071 M{\'a}laga,
Spain.}
\email[A.~Viruel]{viruel@uma.es}
\subjclass[2010]{Primary 20G05; Secondary 05E18, 16T15}
\thanks{
First and second authors are partially  supported by Ministerio de Econom{\'\i}a y Competitividad (Spain), grant MTM2016-79661-P (AEI/FEDER, UE, support included). Second author is partially supported by Ministerio de Educaci\'on, Cultura y Deporte (Spain) grant FPU14/05137. Second and third authors are partially supported by  Ministerio de Econom{\'\i}a y Competitividad (Spain),  grant MTM2016-78647-P (AEI/FEDER, UE, support included). }
   \def\MR#1{}
\begin{document}

\begin{abstract}
	Let $G$ be a group and let $\rho\colon G\to\Sym(V)$ be a permutation representation of $G$ on a set $V$. We prove that there is a faithful $G$-coalgebra $C$ such that $G$ arises as the image of the restriction of $\Aut(C)$ to $G(C)$, the set of grouplike elements of $C$. Furthermore, we show that $V$ can be regarded as a subset of $G(C)$ invariant through the $G$-action, and that the composition of the inclusion $G\hookrightarrow\Aut(C)$ with the restriction $\Aut(C)\to\Sym(V)$ is precisely $\rho$. We use these results to prove that isomorphism classes of certain families of groups can be distinguished through the coalgebras on which they act faithfully.
\end{abstract}

\maketitle

\section{Introduction}\label{section:introduction}

Given $X$ an object in a category $\mathcal{C}$, the study of its automorphism group, $\Aut(X)$, is a difficult task. In fact, even deciding which groups arise as the automorphism groups of objects in $\mathcal{C}$ is far from trivial, see \cite{Jon18}. Nonetheless, it is also rewarding, as it can be expected that distinguished objects have distinguished automorphism groups, which in turn may give valuable information regarding the object $X$. Not only that, but if we know the automorphism groups of enough objects in $\mathcal{C}$, we can also draw conclusions regarding the category itself. One clear example of this comes from representation theory, as the automorphism groups of the objects of a category tell us a lot about which groups may act on which objects. 

The category of rings is one instance in which the possible automorphism groups of objects have been extensively studied. Indeed, there are many references in the literature regarding the realisability of groups as automorphisms of rings (see for example \cite{CosMenVir19,CosVir14b,KapShe12} regarding the associative case, and \cite{GorPop03} on the non-associative one). However, very little is known about the problem of representing groups as automorphisms of coalgebras. Moreover, given that coalgebras are only truly dual of algebras in the finite-dimensional case, general results on automorphisms of coalgebras cannot be deduced from the preexisting literature on automorphism groups of rings.

This article aims at providing the first result on the realisability of groups as automorphisms of coalgebras. Not only are we successful on that task, we also prove results regarding the realisability of (not necessarily finite) permutation representations. The key ingredient is the classical solution to the group realisability question in graphs, \cite{Gro59,Fru39,Sab60}, which allows us to define the desired coalgebras associated to graphs and draw conclusions on the realisability of groups. Following this idea, in Definition \ref{definition:pathCoalgebra} we introduce a coalgebra $C(\G)$ associated to a given graph $\G$ whose automorphism group is related to that of the graph. Namely, we prove:

\begin{theorem}\label{theorem:coalgebraExactSeq}
	Let $\Bbbk$ be a field and let $\G$ be a digraph. There is a $\Bbbk$-coalgebra $C(\G)$ such that $G\big(C(\G)\big) = V(\G)$ and the restriction map $\Aut\big(C(\G)\big)\to \Sym\big(G(C(\G))\big) = \Sym\big(V(\G)\big)$ induces a split short exact sequence of groups
	\[1\longrightarrow\prod_{e\in E(\G)}\left(\Bbbk\rtimes\Bbbk^\times\right) \longrightarrow \Aut\big(C(\G)\big)\longrightarrow \Aut(\G)\longrightarrow 1.\]
\end{theorem}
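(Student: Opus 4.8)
The plan is to recover $\Aut(C(\G))$ from purely coalgebraic invariants of $C(\G)$ — its grouplike elements and its skew-primitive elements. Recall that $C(\G)$ is the $\Bbbk$-coalgebra with basis $V(\G)\sqcup E(\G)$ in which every vertex is grouplike and every arrow $e\colon u\to v$ is skew-primitive, $\Delta(e)=e\otimes u+v\otimes e$ (see Definition \ref{definition:pathCoalgebra}). I would first confirm $G(C(\G))=V(\G)$: writing a generic $x=\sum_{v}a_vv+\sum_{e}b_ee$ and imposing $\Delta(x)=x\otimes x$, comparison of coefficients on the basis $\{y\otimes z\}$ of $C(\G)\otimes C(\G)$ gives $b_e^2=0$ from the $e\otimes e$ coefficient, hence $b_e=0$, after which the $v\otimes w$ coefficients together with $\varepsilon(x)=1$ leave exactly one $a_v$ equal to $1$. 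The technical core is then the description, for grouplikes $g,h$, of $P_{g,h}:=\{y\in C(\G):\Delta(y)=y\otimes g+h\otimes y\}$: the same bookkeeping shows that the edge-part of any $y\in P_{g,h}$ is supported on the arrows $g\to h$ while its vertex-part is a scalar multiple of $h-g$, so for $g\neq h$ one gets $P_{g,h}=\Bbbk(h-g)\oplus\Bbbk e$ when an arrow $e\colon g\to h$ exists and $P_{g,h}=\Bbbk(h-g)$ otherwise, while $P_{g,g}=0$.

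With this in hand I would construct the surjection $\Aut(C(\G))\twoheadrightarrow\Aut(\G)$. Any $\phi\in\Aut(C(\G))$ permutes grouplikes, so it restricts to a permutation $\sigma$ of $V(\G)$ with $\phi|_{\Bbbk V(\G)}=\sigma$, and it induces a linear automorphism $\bar\phi$ of the quotient vector space $C(\G)/\Bbbk V(\G)\cong\Bbbk E(\G)$. Applying $\phi\otimes\phi$ to $\Delta(e)$ shows $\phi(e)\in P_{\sigma(u),\sigma(v)}$ for an arrow $e\colon u\to v$, so by the skew-primitive computation $\bar\phi$ carries $\Bbbk e$ into the span of the arrows $\sigma(u)\to\sigma(v)$; bijectivity of $\bar\phi$ (and of its inverse, coming from $\phi^{-1}$) then forces $\sigma\in\Aut(\G)$, and $\phi\mapsto\sigma$ is readily a group homomorphism. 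It is split by $\sigma\mapsto\phi_\sigma$, with $\phi_\sigma$ acting as $\sigma$ on both vertices and arrows — this is visibly a coalgebra automorphism, and $\sigma\mapsto\phi_\sigma$ is a homomorphism.

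Finally I would identify the kernel. If $\sigma=\id$, the above forces $\phi(e)=a_e(v-u)+b_ee$ for each arrow $e\colon u\to v$, with $a_e\in\Bbbk$ arbitrary and $b_e\in\Bbbk^\times$ (nonzero by bijectivity of $\bar\phi$); conversely every such family defines a coalgebra automorphism fixing the grouplikes, and the choices are independent over the arrows. Composing two kernel automorphisms yields, arrow by arrow, $(a_e,b_e)\cdot(a_e',b_e')=(a_e+b_ea_e',\,b_eb_e')$, the multiplication of the affine group $\Bbbk\rtimes\Bbbk^\times$; hence the kernel is $\prod_{e\in E(\G)}(\Bbbk\rtimes\Bbbk^\times)$ and the sequence is split short exact as claimed.

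The step I expect to be the main obstacle is the passage, inside the second paragraph, from knowing only that $\sigma$ arises as the vertex permutation of some coalgebra automorphism to concluding $\sigma\in\Aut(\G)$ (together with its mirror, that every element of $\Aut(\G)$ lifts and that $\sigma\mapsto\phi_\sigma$ is a genuine section), and, throughout, the discipline of tracking true bijectivity rather than mere injectivity — which for infinite $\G$ means producing explicit inverses. Once the spaces $P_{g,h}$ are pinned down, the remaining verifications are organised coefficient computations.
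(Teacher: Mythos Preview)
Your proposal is correct and follows essentially the same route as the paper: the paper's Lemmas~\ref{lemma:familyAut} and~\ref{lemma:computingAut} amount exactly to your computation of the skew-primitive spaces $P_{g,h}$ and the resulting description of every automorphism as $f_{\lambda,\mu}^\sigma$, and the kernel/splitting analysis is identical. Two small slips to fix: your formula $\Delta(e)=e\otimes u+v\otimes e$ swaps the tensor factors relative to Definition~\ref{definition:pathCoalgebra} (which has $\Delta(e)=u\otimes e+e\otimes v$), and the claim $P_{g,g}=0$ tacitly assumes the digraph has no loops.
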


In particular, since every group can be realised as the automorphism group of a digraph, \cite{Gro59,Sab60}, we obtain the following immediate consequence:

\begin{corollary}\label{corollary:groupRealisation}
	Let $\Bbbk$ be a field and let $G$ be a group. There is a $\Bbbk$-coalgebra $C$ such that $\Aut(C)\cong K\rtimes G$, where $K$ is a direct product of semidirect products of the form $\Bbbk\rtimes\Bbbk^\times$. Furthermore, $G$ is the image of the restriction of the automorphisms of $C$ to $\Sym\big(G(C)\big)$.
\end{corollary}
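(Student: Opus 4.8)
The plan is to feed the classical solution of the group realisability problem for digraphs into Theorem \ref{theorem:coalgebraExactSeq}. First I would invoke the theorem of de Groot and Sabidussi \cite{Gro59,Sab60}: every group $G$ occurs as the automorphism group of some digraph $\G$, with no finiteness restriction on $G$. Fix such a $\G$ and set $C := C(\G)$.

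Applying Theorem \ref{theorem:coalgebraExactSeq} to $\G$ produces the split short exact sequence
\[
1 \longrightarrow \prod_{e\in E(\G)}\left(\Bbbk\rtimes\Bbbk^\times\right) \longrightarrow \Aut(C) \longrightarrow \Aut(\G) \longrightarrow 1 .
\]
A split short exact sequence of groups $1\to K\to A\to Q\to 1$ realises $A$ as a semidirect product $K\rtimes Q$, so here $\Aut(C)\cong K\rtimes\Aut(\G)$ with $K=\prod_{e\in E(\G)}\left(\Bbbk\rtimes\Bbbk^\times\right)$, which is precisely a direct product of semidirect products of the form $\Bbbk\rtimes\Bbbk^\times$. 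Composing the splitting $\Aut(\G)\to\Aut(C)$ with a fixed isomorphism $\Aut(\G)\cong G$ gives the asserted $\Aut(C)\cong K\rtimes G$.

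For the final assertion, Theorem \ref{theorem:coalgebraExactSeq} tells us that $G(C)=V(\G)$ and that the surjection $\Aut(C)\to\Aut(\G)$ appearing in the sequence is the restriction of automorphisms of $C$ to $\Sym\big(G(C)\big)=\Sym\big(V(\G)\big)$; hence the image of this restriction is $\Aut(\G)$, which is $G$ under the chosen identification. I expect no genuine obstacle here, since all the content sits in Theorem \ref{theorem:coalgebraExactSeq}; the only point worth flagging is that $E(\G)$ may be infinite, so that $K$ is a possibly infinite direct product, which the statement already permits.
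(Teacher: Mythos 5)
Your proposal is correct and follows exactly the paper's own route: realise $G$ as $\Aut(\G)$ for a digraph $\G$ via de Groot--Sabidussi, then read off the split short exact sequence of Theorem \ref{theorem:coalgebraExactSeq} to get $\Aut\big(C(\G)\big)\cong K\rtimes G$ and the statement about the restriction to $\Sym\big(G(C)\big)$. The paper treats this as an immediate consequence and your write-up fills in the same (routine) details.
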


Consequently, every group arises as the permutation group induced by the restriction of the automorphisms of a coalgebra to its set of grouplike elements. Thus, it is natural to ask if every possible permutation group (or more generally, every permutation representation) arises in this way. This would follow from Theorem \ref{theorem:coalgebraExactSeq} if every permutation representation were realisable in the context of graphs. However, we know that to be false, \cite[Section 4]{Cam04}, \cite{Fru39}. 

Nonetheless, we do know that every finite permutation group appears as the restriction of the automorphism group of a graph to an invariant subset of its vertices, \cite[Theorem 1.1]{Bou69}. In Theorem \ref{theorem:graphAction} we generalise this result to include (non-necessarily finite) permutation representations. Using this result, we prove:

\begin{theorem}\label{theorem:coalgebraActionRealisation}
	Let $G$ be a group, $\Bbbk$ a field and $\rho\colon G\to \Sym(V)$ a permutation representation of $G$ on a set $V$. There exists a $G$-coalgebra $C$ such that:
	\begin{enumerate}
		\item $G$ acts faithfully on $C$, that is, the action induces a group monomorphism $G\hookrightarrow \Aut(C)$;
		\item the image of the restriction map $\Aut(C)\to\Sym\big(G(C)\big)$ is $G$;
		\item there is a subset $V\subset G(C)$ invariant through the $\Aut(C)$-action on $C$ and such that $\rho$ is the composition of the inclusion $G\hookrightarrow\Aut(C)$ with the restriction $\Aut(C)\to\Sym(V)$; and,
		\item there is a faithful action $\bar{\rho}\colon G\to \Sym\big(G(C)\setminus V\big)$ such that the composition of the inclusion $G\hookrightarrow\Aut(C)$ with the restriction map $\Aut(C)\to\Sym\big(G(C)\big)$ is $\rho\oplus\bar{\rho}$.
	\end{enumerate}
\end{theorem}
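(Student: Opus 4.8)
The plan is to combine Theorem \ref{theorem:coalgebraExactSeq} with the still-unstated Theorem \ref{theorem:graphAction}, which is asserted to generalise Bouwer's realisation result \cite{Bou69} to arbitrary permutation representations. So the first step is to invoke Theorem \ref{theorem:graphAction} to produce a digraph $\G$ together with a $G$-invariant subset $V \subset V(\G)$ such that the restriction $\Aut(\G) \to \Sym(V(\G))$, precomposed with a group monomorphism $G \hookrightarrow \Aut(\G)$, recovers $\rho$ on $V$; moreover I would want that version of the statement to already give that $G$ is the \emph{full} image of $\Aut(\G) \to \Sym(V)$ and that $G$ acts faithfully on the complement $V(\G)\setminus V$. (If Bouwer-type constructions only guarantee $G \hookrightarrow \Aut(\G)$ with $\Aut(\G)$ possibly larger, one must be slightly careful: a standard rigidifying trick — attaching pairwise non-isomorphic rigid gadgets indexed so as to kill all extra automorphisms while preserving the $G$-action — upgrades this to $\Aut(\G) = G$, or at least to control over the image in $\Sym(V)$; I expect Theorem \ref{theorem:graphAction} to be stated so that this is already handled.)

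The second step is to apply the functor $\G \mapsto C(\G)$ of Definition \ref{definition:pathCoalgebra} to this digraph and read off the conclusions from Theorem \ref{theorem:coalgebraExactSeq}. That theorem gives $G(C(\G)) = V(\G)$ and a split exact sequence $1 \to \prod_{e \in E(\G)}(\Bbbk \rtimes \Bbbk^\times) \to \Aut(C(\G)) \to \Aut(\G) \to 1$ compatible with the restriction maps to $\Sym(V(\G))$ — note the kernel here acts trivially on grouplikes, so the image of $\Aut(C(\G)) \to \Sym(G(C(\G)))$ equals the image of $\Aut(\G) \to \Sym(V(\G))$. Composing the splitting $\Aut(\G) \to \Aut(C(\G))$ with $G \hookrightarrow \Aut(\G)$ yields a monomorphism $G \hookrightarrow \Aut(C(\G))$, which is the faithful $G$-action of item (1), defining $C := C(\G)$ as a $G$-coalgebra. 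Item (2) follows since the image of $\Aut(C) \to \Sym(G(C))$ is exactly the image of $\Aut(\G) \to \Sym(V(\G))$, which is $G$ by choice of $\G$. For item (3): $V \subset V(\G) = G(C)$ is $G$-invariant by construction; it is in fact invariant under the whole of $\Aut(C)$ because $\Aut(C) \to \Sym(G(C))$ has image $G$, and $\rho$ is recovered as the composite $G \hookrightarrow \Aut(C) \to \Sym(V)$ by the compatibility of the splittings with restriction. Item (4) is then immediate: set $\bar\rho$ to be the restriction of the $G$-action to $\Sym(G(C)\setminus V) = \Sym(V(\G)\setminus V)$, which is faithful by the corresponding property of the graph action, and the restriction $\Aut(C) \to \Sym(G(C))$ decomposes as $\rho \oplus \bar\rho$ because $V$ and its complement are both invariant.

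The genuinely new content is thus entirely in Theorem \ref{theorem:graphAction} — extending \cite[Theorem 1.1]{Bou69} from finite permutation groups to arbitrary permutation representations $\rho \colon G \to \Sym(V)$. I expect this to be the main obstacle, and the proof above is essentially a formal transport of that result through the coalgebra functor of Theorem \ref{theorem:coalgebraExactSeq}. One subtlety to check carefully is the interaction between the two splittings: the split exact sequence of Theorem \ref{theorem:coalgebraExactSeq} must be shown to be natural enough (in particular $\Aut(\G)$-equivariant with respect to the chosen splitting $\Aut(\G)\to\Aut(C(\G))$) that the composite $G \to \Aut(C) \to \Sym(V)$ is \emph{literally} $\rho$, not merely conjugate to it; this should follow because the splitting of Theorem \ref{theorem:coalgebraExactSeq} is induced by the functoriality of $C(-)$ on graph automorphisms and hence commutes on the nose with the vertex-restriction maps. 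Once that compatibility is in hand, items (1)–(4) drop out with no further computation.
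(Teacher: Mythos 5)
Your proposal is correct and follows essentially the same route as the paper: take the graph $\G$ produced by Theorem \ref{theorem:graphAction} (viewed as a bidirected digraph), set $C = C(\G)$, let $G$ act via the splitting $\sigma \mapsto f_{0,1}^{\sigma}$, and observe that every $f_{\lambda,\mu}^{\sigma}$ restricts to $\sigma$ on $G(C) = V(\G)$, so the kernel acts trivially on grouplikes and all four items transport directly from the graph statement. The compatibility you flag as a potential subtlety is exactly what the explicit formula $f_{\lambda,\mu}^{\sigma}(v) = \sigma(v)$ settles, as in the paper.
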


Finally, we want to study the isomorphism problem in the category of groups using the representation theory on coalgebras. Namely, we want to see how isomorphism classes of groups can be distinguished through the coalgebras on which they act. This kind of problem has been deeply studied in other contexts. For example, the problem of distinguishing groups through their linear representations received significant attention until Hertweek solved it in the negative in a celebrated paper, \cite{Her01}. In this paper, Hertweek proves that there are two non-isomorphic finite groups $G$ and $H$, both of order $2^{21} 97^{28}$, with the same integral group ring, which in particular implies that they both have equivalent linear representation theories.

One more recent example is \cite{CosVir14b}, where the authors deal with the isomorphism problem in groups using the representation theory on commutative differential graded algebras. They are able to show that groups in a family containing all finite groups can be distinguished through their faithful actions on these algebraic structures.

In this paper we prove two results regarding the isomorphism problem of groups through representations on coalgebras. The first result tells apart isomorphism classes of groups from a family wider than the one considered in \cite{CosVir14b}, but it requires that we focus on how the action looks like on grouplike elements. Recall that a group $G$ is co-Hopfian if it does not contain any proper subgroups isomorphic to itself, or equivalently, if any monomorphism $G\hookrightarrow G$ is an automorphism. For example, finite groups are clearly co-Hopfian. For this family of groups, we prove:

\begin{theorem}\label{theorem:isoProblemGrouplike}
	Let $\Bbbk$ be a field and let $G$ and $H$ be two co-Hopfian groups. The following statements are equivalent:
	\begin{enumerate}
		\item $G$ and $H$ are isomorphic; and,
		\item for any $\Bbbk$-coalgebra $C$, there is an action of $G$ on $C$ that restricts to a faithful action on $G(C)$ if and only if there is an action of $H$ on $C$ that restricts to a faithful action on $G(C)$.
	\end{enumerate}
\end{theorem}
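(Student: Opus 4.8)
\emph{Strategy.} The implication (1)$\Rightarrow$(2) is immediate, and I would dispose of it first: an isomorphism $\varphi\colon G\to H$ turns any action of $G$ on a $\Bbbk$-coalgebra $C$ into an action of $H$ on $C$ (precompose with $\varphi^{-1}$), and this bijection between actions clearly preserves the property ``restricts to a faithful action on $G(C)$'' together with its converse. So the substance is (2)$\Rightarrow$(1), and the plan is to extract from hypothesis (2) two group monomorphisms $G\hookrightarrow H$ and $H\hookrightarrow G$, and then invoke the co-Hopfian hypothesis to promote them to an isomorphism.

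To produce the first monomorphism, fix a digraph $\G_G$ with $\Aut(\G_G)\cong G$ (such a digraph exists by \cite{Gro59,Sab60}) and set $C_G:=C(\G_G)$. By Theorem \ref{theorem:coalgebraExactSeq}, the exact sequence
\[1\longrightarrow\prod_{e\in E(\G_G)}\left(\Bbbk\rtimes\Bbbk^\times\right)\longrightarrow\Aut(C_G)\longrightarrow\Aut(\G_G)\longrightarrow 1\]
splits and is induced by the restriction map $\Aut(C_G)\to\Sym\big(G(C_G)\big)=\Sym\big(V(\G_G)\big)$. Composing a splitting $\Aut(\G_G)\hookrightarrow\Aut(C_G)$ with this restriction recovers the natural faithful action of $\Aut(\G_G)\cong G$ on $V(\G_G)=G(C_G)$; hence $C_G$ carries an action of $G$ that restricts to a faithful action on $G(C_G)$. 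Hypothesis (2) then yields an action of $H$ on $C_G$ that restricts to a faithful action on $G(C_G)$, i.e. the composite $H\to\Aut(C_G)\to\Sym\big(G(C_G)\big)$ is injective. Since the image of $\Aut(C_G)\to\Sym\big(G(C_G)\big)$ equals $\Aut(\G_G)\cong G$ (again by Theorem \ref{theorem:coalgebraExactSeq}), this exhibits a monomorphism $H\hookrightarrow G$. Running the same argument with a digraph $\G_H$ satisfying $\Aut(\G_H)\cong H$ produces a monomorphism $G\hookrightarrow H$.

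Finally, if $f\colon G\hookrightarrow H$ and $g\colon H\hookrightarrow G$ denote these two monomorphisms, then $g\circ f\colon G\to G$ is injective, hence an automorphism because $G$ is co-Hopfian; therefore $g$ is surjective, and being injective as well, it is an isomorphism $H\cong G$. (Symmetrically one could instead use that $H$ is co-Hopfian; the statement assumes both, which also makes the argument insensitive to which side one starts from.)

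I expect the genuinely delicate points to be minor bookkeeping rather than conceptual: carefully matching the informal phrase ``the action restricts to a faithful action on $G(C)$'' with injectivity of the induced homomorphism into $\Sym\big(G(C)\big)$, and confirming that the image of $\Aut(C_G)$ on grouplikes is precisely $\Aut(\G_G)$ --- both of which are handed to us by Theorem \ref{theorem:coalgebraExactSeq}. The conceptual crux is simply the observation that realisability of $G$ and of $H$ as ``grouplike automorphism images'' on a common coalgebra only forces mutual embeddability, so that a hypothesis of co-Hopfian type is genuinely needed to upgrade this to isomorphism --- which is exactly why it appears in the theorem.
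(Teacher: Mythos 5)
Your proposal is correct and follows essentially the same route as the paper: realise $G$ and $H$ as automorphism groups of digraphs, pass to the coalgebras $C(\G)$ of Definition \ref{definition:pathCoalgebra}, use Theorem \ref{theorem:coalgebraExactSeq} to get mutual embeddings $G\le H\le G$ via the image of the restriction to grouplikes, and conclude by co-Hopficity. The only cosmetic difference is that you spell out the final step (the composite $G\to G$ being an automorphism forces both embeddings to be isomorphisms), which the paper leaves implicit.
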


For our second result regarding the isomorphism problem we do not focus on grouplike elements, but we need to further restrict the considered family of groups. Let $\Bbbk$ be a finite field of cardinality $p^n$, $p$ a prime. In Definition \ref{definition:groupClass} we introduce a family of groups $\mathfrak{G}_{p,n}$ for which we prove:

\begin{theorem}\label{theorem:isoProblemGeneral}
	Let $\Bbbk$ be a finite field of order $p^n$, $p$ prime. Let $G$ and $H$ be groups in $\mathfrak{G}_{p,n}$. The following are equivalent.
	\begin{enumerate}
		\item $G$ and $H$ are isomorphic; and,
		\item for every $\Bbbk$-coalgebra $C$, $G$ acts faithfully on $C$ if and only if $H$ acts faithfully on $C$.
	\end{enumerate}		
\end{theorem}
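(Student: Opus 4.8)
The implication (1) $\Rightarrow$ (2) is formal: an isomorphism $\varphi\colon G\to H$ turns a faithful action $H\to\Aut(C)$ into the faithful action $G\xrightarrow{\varphi}H\to\Aut(C)$, and symmetrically; hence $G$ acts faithfully on a fixed $\Bbbk$-coalgebra $C$ exactly when $H$ does.

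For (2) $\Rightarrow$ (1) the plan is to produce mutual embeddings $G\hookrightarrow H$ and $H\hookrightarrow G$ from the coalgebras of Theorem \ref{theorem:coalgebraExactSeq}, and then close the argument using the co-Hopfian property built into Definition \ref{definition:groupClass}. First I would realise $G$ as $\Aut(\G)$ for some digraph $\G$ (possible by \cite{Gro59,Sab60}) and feed $\G$ into Theorem \ref{theorem:coalgebraExactSeq}, obtaining a $\Bbbk$-coalgebra $C(\G)$ sitting in a split short exact sequence
\[
1\longrightarrow K\longrightarrow\Aut\big(C(\G)\big)\longrightarrow G\longrightarrow 1,\qquad K=\prod_{e\in E(\G)}\big(\Bbbk\rtimes\Bbbk^\times\big).
\]
The splitting makes $G$ act faithfully on $C(\G)$, so by hypothesis (2) there is a faithful action of $H$ on $C(\G)$, i.e. a monomorphism $\iota\colon H\hookrightarrow\Aut\big(C(\G)\big)\cong K\rtimes G$. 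Since $K\trianglelefteq K\rtimes G$, the subgroup $N:=\iota(H)\cap K$ is normal in $\iota(H)\cong H$ and embeds into $K$, a direct product of copies of the affine group $\Bbbk\rtimes\Bbbk^\times$ of the finite field $\Bbbk$. Here $H\in\mathfrak{G}_{p,n}$ enters: the defining property of the class forbids exactly such nontrivial normal subgroups, so $N=1$, whence the restriction to $\iota(H)$ of the projection $K\rtimes G\to G$ is injective and $H\hookrightarrow G$.

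By the symmetry of hypothesis (2) in $G$ and $H$, the same construction applied to a digraph realising $H$ (using now that $G\in\mathfrak{G}_{p,n}$) yields an embedding $G\hookrightarrow H$. The composite $G\hookrightarrow H\hookrightarrow G$ is then a monomorphism $G\to G$; since $G\in\mathfrak{G}_{p,n}$ is co-Hopfian it is an automorphism, which forces the embedding $H\hookrightarrow G$ to be surjective, hence an isomorphism, and $G\cong H$.

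I expect the main obstacle to lie not in the deduction above — which is essentially formal once Theorem \ref{theorem:coalgebraExactSeq} is available — but in Definition \ref{definition:groupClass} and the lemmas preparing it. Over a finite field $\Bbbk$ of order $p^n$ the group $\Bbbk\rtimes\Bbbk^\times$ is finite of exponent dividing $p(p^n-1)$, so every subgroup of the kernel $K$ is locally finite of exponent dividing $p(p^n-1)$; the real content is to carve out a class of groups that at once (i) contains no nontrivial normal subgroup embeddable in such a $K$, (ii) is co-Hopfian, and (iii) is still wide enough to be worth a theorem — one expects it to contain, for instance, all torsion-free co-Hopfian groups and all finite groups with no nontrivial normal subgroup of exponent dividing $p(p^n-1)$. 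Verifying that the actual class $\mathfrak{G}_{p,n}$ satisfies (i), (ii) and (iii), rather than running the embedding argument, is where the difficulty concentrates.
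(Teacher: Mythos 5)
Your proposal is correct and follows essentially the same route as the paper: realise $G$ and $H$ as automorphism groups of digraphs, pass to the coalgebras $C(\G)$ and $C(\mathcal{H})$ of Theorem \ref{theorem:coalgebraExactSeq}, intersect the embedded copy of the other group with the normal kernel $\prod_{e\in E(\G)}(\Bbbk\rtimes\Bbbk^\times)$, kill that intersection using the exponent condition in Definition \ref{definition:groupClass}, and conclude from mutual embeddings via co-Hopficity. The only cosmetic difference is that you invoke the split exact sequence directly where the paper cites Corollary \ref{corollary:autC(G)}, and your exponent bound $p(p^n-1)$ is a sharper (but equally sufficient) version of the paper's $p^n(p^n-1)$.
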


We remark that, although the family $\mathfrak{G}_{p,n}$ is smaller than the class of co-Hopfian groups, $\mathfrak{G}_{2,1}$ still contains all $2$-reduced groups, that is, all groups with no normal $2$-subgroups.

\emph{Outline of the paper}. In Section \ref{section:coalgebras} we introduce the coalgebra $C(\G)$, Definition \ref{definition:pathCoalgebra}, compute its automorphism group and prove Theorem \ref{theorem:coalgebraExactSeq}. Section \ref{section:graphs} deals with the realisation of permutation representations in the category of graphs, providing a generalisation of \cite[Theorem 1.1]{Bou69}. In order to do so, we first provide a solution using binary relational systems, Theorem \ref{theorem:mainIRel}, to then translate it to simple graphs, Theorem \ref{theorem:graphAction}. Finally, Section \ref{section:actions} is devoted to group actions on coalgebras. In this section, we use the results in the previous sections to first discuss the realisability of permutation representations on coalgebras, proving Theorem \ref{theorem:coalgebraActionRealisation}, and then consider the isomorphism problem, proving Theorem \ref{theorem:isoProblemGrouplike} and Theorem \ref{theorem:isoProblemGeneral}.

\section{From graphs to coalgebras}\label{section:coalgebras}

In this section we want to build, associated to a combinatorial object, a coalgebra on which a given group acts faithfully. Traditionally, coalgebras associated to combinatorial objects are defined based on quivers. However, as our constructions are mostly graph-theoretical, we work in the framework of directed graphs or digraphs.

Then, let $\G = \big(V(\G), E(\G)\big)$ be a digraph. We build, associated to $\G$, a coalgebra $C(\G)$ on which $\Aut(\G)$ acts faithfully. Furthermore, we show that the restriction of the $\Aut(\G)$-action to the set of grouplike elements of $C(\G)$ is also faithful. More precisely, the image of the obvious map $\Aut(C(\G))\to\Sym\big(G(C(\G))\big)$ is precisely $\Aut(\G)$. We do so in Theorem \ref{theorem:coalgebraExactSeq}, our main result for this section. Let us begin by introducing the coalgebra $C(\G)$. 

\begin{definition}\label{definition:pathCoalgebra}
	Let $\Bbbk$ be a field and let $\G$ be a digraph. We define a coalgebra $C(\G) = (C,\Delta,\varepsilon)$ where $C = \Bbbk\{v\mid v\in V(\G)\}\oplus \Bbbk\{e\mid e\in E(\G)\}$ and where
	\begin{itemize}
		\item for each $v\in V(\G)$, $\Delta(v) = v\otimes v$ and $\varepsilon(v) = 1$; and,
		\item for each $e = (v_1, v_2)\in E(\G)$, $\Delta(e) = v_1\otimes e + e\otimes v_2$ and $\varepsilon(e) = 0$.
	\end{itemize}
\end{definition}

\begin{remark}\label{remark:GrouplikeElements}
	The coalgebra $C(\G)$ corresponds to the degree 1 term of the coradical filtration of the path coalgebra of $\G$ regarded as a quiver, see \cite[Section 5.1]{Chi04}. In particular, the grouplike elements of $C(\G)$ are precisely the grouplike elements of the path coalgebra of $\G$, that is, the vertices of the graph. Therefore, $G\big(C(\G)\big) = V(\G)$.
\end{remark}

We now move on to the computation of the automorphism group of $C(\G)$. In order to do so, we first define a family of automorphisms of $C(\G)$, Lemma \ref{lemma:familyAut}, to then show that no other automorphisms of $C(\G)$ exist, Lemma \ref{lemma:computingAut}. By abuse of notation, given $\sigma\in\Aut(\G)$, we write $\sigma$ also to denote the self-map of $E(\G)$ that maps $e = (v_1,v_2) \in E(\G)$ to $\big(\sigma(v_1),\sigma(v_2)\big)\in E(\G)$, thus $\sigma(e) = \big(\sigma(v_1),\sigma(v_2)\big)$.

\begin{lemma}\label{lemma:familyAut}
	Let $\G$ be a digraph, $\Bbbk$ be a field and consider $C(\G)$ the coalgebra introduced in Definition \ref{definition:pathCoalgebra}. For any $\sigma\in\Aut(\G)$, and for any maps $\lambda\colon E(\G)\to\Bbbk$ and $\mu\colon E(\G)\to\Bbbk^\times$, the linear map $f_{\lambda,\mu}^\sigma\colon C(\G)\to C(\G)$ given by
	\[\begin{cases}
		f_{\lambda,\mu}^\sigma(v) = \sigma(v), & \text{for all $v\in V(\G)$},\\
		f_{\lambda,\mu}^\sigma(e) = \lambda(e)\big(\sigma(v_2)-\sigma(v_1)\big) + \mu(e)\sigma(e), & \text{for all $e = (v_1,v_2)\in E(\G)$}.
	\end{cases}\]
	is a coalgebra automorphism of $C(\G)$.
\end{lemma}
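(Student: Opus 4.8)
The plan is to verify directly that $f_{\lambda,\mu}^\sigma$ is both a coalgebra morphism and a bijection. First I would check that it is a morphism of coalgebras, which amounts to the two conditions $\varepsilon\circ f_{\lambda,\mu}^\sigma = \varepsilon$ and $(f_{\lambda,\mu}^\sigma\otimes f_{\lambda,\mu}^\sigma)\circ\Delta = \Delta\circ f_{\lambda,\mu}^\sigma$, and it suffices to check them on the basis $\{v\mid v\in V(\G)\}\cup\{e\mid e\in E(\G)\}$. The counit condition is immediate: on a vertex $v$ we get $\varepsilon(\sigma(v)) = 1 = \varepsilon(v)$, and on an edge $e = (v_1,v_2)$ we get $\varepsilon(\lambda(e)(\sigma(v_2)-\sigma(v_1)) + \mu(e)\sigma(e)) = \lambda(e)(1-1) + \mu(e)\cdot 0 = 0 = \varepsilon(e)$. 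The comultiplication condition on a vertex is trivial, since $\Delta(\sigma(v)) = \sigma(v)\otimes\sigma(v)$ equals $(f_{\lambda,\mu}^\sigma\otimes f_{\lambda,\mu}^\sigma)(v\otimes v)$.

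The one genuine computation is the comultiplication condition on an edge $e = (v_1,v_2)$, and I expect this to be the main (though still routine) obstacle. On one side, $\Delta(f_{\lambda,\mu}^\sigma(e)) = \lambda(e)\big(\sigma(v_2)\otimes\sigma(v_2) - \sigma(v_1)\otimes\sigma(v_1)\big) + \mu(e)\big(\sigma(v_1)\otimes\sigma(e) + \sigma(e)\otimes\sigma(v_2)\big)$, using that $\sigma(e) = (\sigma(v_1),\sigma(v_2))$. On the other side, $(f_{\lambda,\mu}^\sigma\otimes f_{\lambda,\mu}^\sigma)(\Delta(e)) = (f_{\lambda,\mu}^\sigma\otimes f_{\lambda,\mu}^\sigma)(v_1\otimes e + e\otimes v_2)$, which expands to $\sigma(v_1)\otimes\big(\lambda(e)(\sigma(v_2)-\sigma(v_1)) + \mu(e)\sigma(e)\big) + \big(\lambda(e)(\sigma(v_2)-\sigma(v_1)) + \mu(e)\sigma(e)\big)\otimes\sigma(v_2)$. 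Expanding this and collecting the $\mu(e)$-terms gives exactly $\mu(e)(\sigma(v_1)\otimes\sigma(e) + \sigma(e)\otimes\sigma(v_2))$, while the $\lambda(e)$-terms give $\lambda(e)(\sigma(v_1)\otimes\sigma(v_2) - \sigma(v_1)\otimes\sigma(v_1) + \sigma(v_2)\otimes\sigma(v_2) - \sigma(v_1)\otimes\sigma(v_2))$; the two mixed terms cancel and what remains is $\lambda(e)(\sigma(v_2)\otimes\sigma(v_2) - \sigma(v_1)\otimes\sigma(v_1))$, matching the first side. Hence $f_{\lambda,\mu}^\sigma$ is a coalgebra morphism.

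Finally I would show $f_{\lambda,\mu}^\sigma$ is bijective, which since it is linear and respects the direct-sum decomposition (it sends $\Bbbk V(\G)$ to $\Bbbk V(\G)$ and $\Bbbk E(\G)$ into $\Bbbk V(\G)\oplus\Bbbk E(\G)$) can be seen by exhibiting the inverse explicitly: one checks that $f_{\lambda',\mu'}^{\sigma^{-1}}$ is a two-sided inverse, where $\mu'(e') = \mu(\sigma^{-1}(e'))^{-1}$ and $\lambda'(e') = -\lambda(\sigma^{-1}(e'))\,\mu(\sigma^{-1}(e'))^{-1}$ for $e'\in E(\G)$; this uses crucially that $\mu$ takes values in $\Bbbk^\times$. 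Alternatively, one observes that on vertices $f_{\lambda,\mu}^\sigma$ acts by the permutation $\sigma$, and modulo $\Bbbk V(\G)$ it acts on edges by $e\mapsto\mu(e)\sigma(e)$, so its ``matrix'' in the basis of vertices and edges is block triangular with invertible diagonal blocks, hence invertible. Either way the bijectivity is immediate, completing the proof.
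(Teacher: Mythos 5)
Your proposal is correct and follows essentially the same route as the paper: a direct check of the counit and comultiplication conditions on the basis of vertices and edges (with the same cancellation of the mixed terms $\sigma(v_1)\otimes\sigma(v_2)$), followed by exhibiting the explicit inverse $f^{\sigma^{-1}}_{\lambda',\mu'}$, whose parameters coincide with the paper's $-\tfrac{\lambda}{\mu}\circ\sigma^{-1}$ and $\tfrac{1}{\mu}\circ\sigma^{-1}$. The block-triangularity remark is a harmless extra observation; nothing further is needed.
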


\begin{proof}
	First we have to prove that $f_{\lambda,\mu}^\sigma$ is a morphism of coalgebras. Thus we need to check that $\varepsilon\circ f_{\lambda,\mu}^\sigma = \varepsilon$ and that $\Delta\circ f_{\lambda,\mu}^\sigma = (f_{\lambda,\mu}^\sigma\otimes f_{\lambda,\mu}^\sigma)\circ\Delta$. We do the computations on the generators of $C(\G)$ associated to vertices and edges of $\G$ separately.

	Let $v\in V(\G)$. Regarding the counit:
		\begin{itemize}
			\item $\varepsilon(v) = 1$.
			\item $(\varepsilon\circ f_{\lambda,\mu}^\sigma)(v) = \varepsilon\big(\sigma(v)\big) = 1$.
		\end{itemize}
	Thus they are equal. Similarly, regarding the comultiplication:
		\begin{itemize}
			\item $(\Delta\circ f_{\lambda,\mu}^\sigma)(v) = \Delta\big(\sigma(v)\big) = \sigma(v)\otimes\sigma(v)$.
			\item $\big((f_{\lambda,\mu}^\sigma\otimes f_{\lambda,\mu}^\sigma)\circ\Delta\big)(v) = (f_{\lambda,\mu}^\sigma\otimes f_{\lambda,\mu}^\sigma)(v\otimes v) = f_{\lambda,\mu}^\sigma(v)\otimes f_{\lambda,\mu}^\sigma(v) = \sigma(v)\otimes\sigma(v)$.
		\end{itemize}
	Again they are equal.

	Now let us take $e = (v_1,v_2)\in E(\G)$. First, regarding the counit:
		\begin{itemize}
			\item $\varepsilon(e) = 0$.
			\item $(\varepsilon\circ f_{\lambda,\mu}^\sigma)(e) = \varepsilon\big(\lambda(e)(\sigma(v_2)-\sigma(v_1)) + \mu(e)\sigma(e)\big) = \lambda(e)\big(\varepsilon(\sigma(v_2)) - \varepsilon(\sigma(v_1))\big) + \mu(e)\varepsilon\big(\sigma(e)\big) = 0$.
		\end{itemize}
	Finally, regarding the comultiplication:
		\begin{itemize}
			\item $(\Delta\circ f_{\lambda,\mu}^\sigma)(e) = \Delta\big(\lambda(e)(\sigma(v_2)-\sigma(v_1)) + \mu(e)\sigma(e)\big)$\smallskip

			$\hspace{15pt} = \lambda(e)\big(\Delta(\sigma(v_2)) - \Delta(\sigma(v_1))\big) + \mu(e)\Delta\big(\sigma(e)\big)$\smallskip

			$\hspace{15pt} = \lambda(e)\big(\sigma(v_2)\otimes\sigma(v_2) - \sigma(v_1)\otimes\sigma(v_1)\big) + \mu(e)\big( \sigma(v_1)\otimes\sigma(e) + \sigma(e)\otimes\sigma(v_2) \big)$.\smallskip

			\item $\big((f_{\lambda,\mu}^\sigma\otimes f_{\lambda,\mu}^\sigma)\circ\Delta\big)(e) = (f_{\lambda,\mu}^\sigma\otimes f_{\lambda,\mu}^\sigma)(v_1\otimes e + e\otimes v_2) $\smallskip

			\hspace{15pt}$= f_{\lambda,\mu}^\sigma(v_1)\otimes f_{\lambda,\mu}^\sigma(e) + f_{\lambda,\mu}^\sigma(e) \otimes f_{\lambda,\mu}^\sigma(v_2)$\smallskip

			\hspace{13pt} $= \sigma(v_1)\otimes \big[\lambda(e)\big(\sigma(v_2)-\sigma(v_1)\big) + \mu(e)\big(\sigma(e)\big)\big] + \big[\lambda(e)\big(\sigma(v_2)-\sigma(v_1)\big) + \mu(e)\big(\sigma(e)\big)\big]\otimes\sigma(v_2)$\smallskip

			\hspace{15pt}$ = \lambda(e)\big(\sigma(v_2)\otimes\sigma(v_2) - \sigma(v_1)\otimes\sigma(v_1)\big) + \mu(e)\big( \sigma(v_1)\otimes\sigma(e) + \sigma(e)\otimes\sigma(v_2) \big)$.		
		\end{itemize}

	Consequently, $f_{\lambda,\mu}^\sigma$ is a morphism of coalgebras. It remains to prove that it is an automorphism. We do so by proving that $f_{-\frac{\lambda}{\mu}\sigma^{-1},\frac{1}{\mu}\sigma^{-1}}^{\sigma^{-1}}$ is its inverse. We first consider the composition $f_{-\frac{\lambda}{\mu}\sigma^{-1},\frac{1}{\mu}\sigma^{-1}}^{\sigma^{-1}}\circ f_{\lambda,\mu}^\sigma$:
	\begin{itemize}
		\item For $v\in V(\G)$,
			\[(f_{-\frac{\lambda}{\mu}\sigma^{-1},\frac{1}{\mu}\sigma^{-1}}^{\sigma^{-1}}\circ f_{\lambda,\mu}^\sigma)(v) = (f_{-\frac{\lambda}{\mu},\frac{1}{\mu}}^{\sigma^{-1}})\big(\sigma(v)\big) = \sigma^{-1}\big(\sigma(v)\big) = v.\]

		\item For $e = (v_1, v_2)\in E(\G)$,
			\begin{align*}
				(f_{-\frac{\lambda}{\mu}\sigma^{-1},\frac{1}{\mu}\sigma^{-1}}^{\sigma^{-1}}&\circ f_{\lambda,\mu}^\sigma)(e) = (f_{-\frac{\lambda}{\mu},\frac{1}{\mu}}^{\sigma^{-1}})\big(\lambda(e)(\sigma(v_2)-\sigma(v_1)) + \mu(e)\sigma(e)\big) \\
				= \lambda&(e)\big(f_{-\frac{\lambda}{\mu}\sigma^{-1},\frac{1}{\mu}\sigma^{-1}}^{\sigma^{-1}}(\sigma(v_2))-f_{-\frac{\lambda}{\mu}\sigma^{-1},\frac{1}{\mu}\sigma^{-1}}^{\sigma^{-1}}(\sigma(v_1))\big) + \mu(e)f_{-\frac{\lambda}{\mu}\sigma^{-1},\frac{1}{\mu}\sigma^{-1}}^{\sigma^{-1}}\sigma(e) \\
				& = \lambda(e)(v_2 - v_1) + \mu(e)\left(-\frac{\lambda(e)}{\mu(e)}(v_2 - v_1) + \frac{1}{\mu(e)}(e)\right) = e.
			\end{align*}
	\end{itemize}
	We also have to consider the composition $f_{\lambda,\mu}^\sigma\circ f_{-\frac{\lambda}{\mu}\sigma^{-1},\frac{1}{\mu}\sigma^{-1}}^{\sigma^{-1}}$. However, notice that $f_{\lambda,\mu}^\sigma$ is recovered from $f_{-\frac{\lambda}{\mu}\sigma^{-1},\frac{1}{\mu}\sigma^{-1}}^{\sigma^{-1}}$ by performing on the indexes the same operations that we perform to $f_{\lambda,\mu}^\sigma$ to obtain $f_{-\frac{\lambda}{\mu}\sigma^{-1},\frac{1}{\mu}\sigma^{-1}}^{\sigma^{-1}}$. Consequently, the proof above already shows that $f_{\lambda,\mu}^\sigma\circ f_{-\frac{\lambda}{\mu}\sigma^{-1},\frac{1}{\mu}\sigma^{-1}}^{\sigma^{-1}}$ is the identity map. Then, $f_{\lambda,\mu}^{\sigma} \in \Aut\big(C(\G)\big)$.
\end{proof}

We now prove that every coalgebra automorphism of $C(\G)$ is of this form.

\begin{lemma}\label{lemma:computingAut}
	Let $\G$ be a digraph, $\Bbbk$ be a field and consider $C(\G)$ the coalgebra introduced in Definition \ref{definition:pathCoalgebra}. If $f\in\Aut\big(C(\G)\big)$, there exists $\sigma\in\Aut(\G)$ and two maps $\lambda\colon E(\G)\to\Bbbk$ and $\mu\colon E(\G)\to\Bbbk^\times$ such that $f$ is the coalgebra automorphism $f_{\lambda,\mu}^\sigma$ introduced in Lemma \ref{lemma:familyAut}.
\end{lemma}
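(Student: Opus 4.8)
The plan is to analyze how an arbitrary $f \in \Aut(C(\G))$ must act on the two distinguished types of generators, using the structural features of $C(\G)$ that are preserved by coalgebra isomorphisms. First I would recall that any coalgebra automorphism permutes grouplike elements; since $G(C(\G)) = V(\G)$ by Remark \ref{remark:GrouplikeElements}, $f$ restricts to a bijection of $V(\G)$, which I will call $\sigma$. The key point then is that $f$ must also respect the coradical filtration: the term $C_0 = \Bbbk\{v \mid v \in V(\G)\}$ is the coradical, so $f(C_0) = C_0$ and $f$ descends to the quotient $C/C_0 \cong \Bbbk\{e \mid e \in E(\G)\}$. Hence, for each edge $e = (v_1, v_2) \in E(\G)$, we may write $f(e) = (\text{element of } C_0) + \sum_{e'} \mu_{e,e'}\, e'$ for scalars $\mu_{e,e'} \in \Bbbk$.

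Next I would pin down the edge part using the comultiplication. Writing $\Delta(e) = v_1 \otimes e + e \otimes v_2$ and comparing $\Delta(f(e)) = (f \otimes f)\Delta(e) = \sigma(v_1) \otimes f(e) + f(e) \otimes \sigma(v_2)$ with the expansion of $\Delta$ applied to $f(e) = (\text{term in } C_0) + \sum_{e'} \mu_{e,e'} e'$, I extract the component lying in $(C_1/C_0) \otimes C_0 \oplus C_0 \otimes (C_1/C_0)$. For each $e' = (w_1, w_2)$ appearing with $\mu_{e,e'} \neq 0$, the term $\mu_{e,e'} e'$ contributes $\mu_{e,e'}(w_1 \otimes e' + e' \otimes w_2)$, while the right-hand side forces these to match $\mu_{e,e'}(\sigma(v_1) \otimes e' + e' \otimes \sigma(v_2))$. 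Since the vertices $w_1$, $w_2$, $\sigma(v_1)$, $\sigma(v_2)$ are grouplike and hence linearly independent, this yields $w_1 = \sigma(v_1)$ and $w_2 = \sigma(v_2)$, i.e.\ $e' = \sigma(e) := (\sigma(v_1), \sigma(v_2))$. Therefore only the single edge $\sigma(e)$ can occur, so $f(e) = (\text{term in } C_0) + \mu(e)\sigma(e)$ for a single scalar $\mu(e) \in \Bbbk$. In particular, for this argument to apply to $e$ itself as well we must have $\sigma(e) \in E(\G)$, so $\sigma \in \Aut(\G)$ — and that $\sigma^{-1}$ likewise preserves edges follows because $f^{-1}$ is also an automorphism of $C(\G)$.

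It remains to identify the $C_0$-component. Write $f(e) = a_e + \mu(e)\sigma(e)$ with $a_e \in C_0$, say $a_e = \sum_{v} c_{e,v}\, v$. Applying the counit, $0 = \varepsilon(e) = \varepsilon(f(e)) = \sum_v c_{e,v}$, so the coefficients of $a_e$ sum to zero. Comparing the $C_0 \otimes C_0$ component of $\Delta(f(e)) = (f\otimes f)\Delta(e)$ then gives $\sum_v c_{e,v}\, v \otimes v = \sigma(v_1)\otimes a_e + a_e \otimes \sigma(v_2)$; using linear independence of the elements $v \otimes w$ one sees that $a_e$ is supported on $\{\sigma(v_1), \sigma(v_2)\}$, so $a_e = \lambda(e)\sigma(v_2) - \lambda(e)\sigma(v_1)$ for some $\lambda(e) \in \Bbbk$ (the opposite signs being forced by the vanishing of the coefficient sum, and the specific matching of $\sigma(v_1)$ with $\sigma(v_2)$ by the cross terms). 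This gives exactly $f(e) = \lambda(e)(\sigma(v_2) - \sigma(v_1)) + \mu(e)\sigma(e)$, i.e.\ $f = f^\sigma_{\lambda,\mu}$; finally $\mu(e) \in \Bbbk^\times$ because $f$ is invertible (if $\mu(e) = 0$ then $f(e) \in C_0$, contradicting injectivity of $f$ together with $f(C_0) = C_0$). The main obstacle is the bookkeeping in the second step — carefully isolating the correct graded piece of the comultiplication identity and using linear independence of grouplikes to kill all but one edge — but this is a direct, if slightly tedious, linear-algebra computation rather than a conceptual difficulty.
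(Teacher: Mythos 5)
Your proposal is correct and follows essentially the same route as the paper's proof: identify $\sigma$ from the permutation of grouplike elements, expand $f(e)$ in the basis, use the counit identity to force the vertex coefficients to sum to zero, compare $\Delta\circ f$ with $(f\otimes f)\circ\Delta$ componentwise to pin the support down to $\sigma(v_1)$, $\sigma(v_2)$ and the single edge $\big(\sigma(v_1),\sigma(v_2)\big)$, invoke injectivity to get $\mu(e)\in\Bbbk^\times$ (hence $\sigma(e)\in E(\G)$), and apply the same argument to $f^{-1}$ to conclude $\sigma\in\Aut(\G)$. The appeal to the coradical filtration is a harmless extra observation; the paper simply expands $f(e)$ over the full basis and derives the same constraints.
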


\begin{proof}
	Let $f\in\Aut\big(C(\G)\big)$ be a coalgebra automorphism. First notice that any automorphism of coalgebras must permute the set of grouplike elements. By Remark \ref{remark:GrouplikeElements}, $G\big(C(\G)\big) = V(\G)$, thus there is a bijective map $\sigma\colon V(\G)\to V(\G)$ such that $f(v) = \sigma(v)$, for all $v\in V(\G)$.

	Now take $e\in E(\G)$. Then there are, for every $x\in V(\G)\cup E(\G)$, elements $\gamma(e,x)\in\Bbbk$ such that
	\begin{equation}\label{eq:f(x)}
		f(e) = \sum_{x\in V(\G)\cup E(\G)} \gamma(e,x) x.
	\end{equation}

	In order for $f$ to be a coalgebra morphism, it needs to verify that $\varepsilon\circ f = \varepsilon$ and that $(f\otimes f)\circ\Delta = \Delta\circ f$. We first consider the equality involving the counit. Recall from Definition \ref{definition:pathCoalgebra} that $\varepsilon(e) = 0$, for $e\in E(\G)$. Thus,
	\begin{equation}\label{eq:counitCompatibility}
		0 = \varepsilon\big(f(e)\big) = \varepsilon\left(\sum_{x\in V(\G)\cup E(\G)}\gamma(e,x)x\right) = \sum_{x\in V(\G)\cup E(\G)} \gamma(e,x)\varepsilon(x) = \sum_{v\in V(\G)} \gamma(e,v).
	\end{equation}

	Now consider the equality regarding the comultiplication. Take $e = (v_1, v_2)\in E(\G)$. Then,
	\begin{equation}\label{eq:deltafe}
		\begin{split}
			(\Delta\circ f)(e) = \Delta\left(\sum_{y\in V(\G)\cup E(\G)}\gamma(e,y)y\right) = \sum_{y\in V(\G)\cup E(\G)} \gamma(e,y) \Delta(y) \hspace{2.7cm}\\\hspace{2.7cm}
			= \sum_{w\in V(\G)} \gamma(e,w) w\otimes w + \sum_{h = (w_1,w_2)\in E(\G)} \gamma(e,h) [w_1\otimes h + h \otimes w_2].
		\end{split}
	\end{equation}
	On the other hand,
	\begin{equation}\label{eq:fdeltae}
		\begin{split}
			\big((f\otimes f)\circ\Delta\big)(e) = (f\otimes f) (v_1\otimes e + e\otimes v_2) = f(v_1)\otimes f(e) + f(e)\otimes f(v_2) \\
			= \sigma(v_1)\otimes\left(\sum_{y\in V(\G)\cup E(\G)} \gamma(e,y)y\right) + \left(\sum_{z\in V(\G)\cup E(\G)} \gamma(e,z)z\right)\otimes \sigma(v_2).
		\end{split}
	\end{equation}
	Equations \eqref{eq:deltafe} and \eqref{eq:fdeltae} must be equal. First,   notice that  $\sigma(v_1)\otimes\sigma(v_1)$ and $\sigma(v_2)\otimes\sigma(v_2)$ are the only summands of the form $w\otimes w$ with $w\in V(\G)$ that may arise in Equation \eqref{eq:fdeltae}. Thus, $\gamma(e,w) = 0$ if $w \ne \sigma(v_1),\sigma(v_2)$. Regarding the coefficients $\gamma\big(e,\sigma(v_1)\big)$ and $\gamma\big(e,\sigma(v_2)\big)$, notice that in Equation \eqref{eq:fdeltae} we have the summand
	\[\big[\gamma\big(e,\sigma(v_1)\big) + \gamma\big(e,\sigma(v_2)\big)\big]\sigma(v_1)\otimes\sigma(v_2),\]
	whereas $\sigma(v_1)\otimes\sigma(v_2)$ does not appear in Equation \eqref{eq:deltafe}. Consequently, $\gamma\big(e,\sigma(v_1)\big) = - \gamma\big(e,\sigma(v_2)\big)$. Moreover, and since no further restrictions exist regarding these coefficients, $\gamma\big(e,\sigma(v_2)\big)\in \Bbbk$.

	Finally, regarding the summands of the form $w_1\otimes (w_1, w_2) + (w_1, w_2)\otimes w_2$ arising in Equation \eqref{eq:deltafe}, the only possible non-trivial such summand in Equation $\eqref{eq:fdeltae}$ is $\sigma(v_1)\otimes \big(\sigma(v_1),\sigma(v_2)\big) + \big(\sigma(v_1),\sigma(v_2)\big) \otimes \sigma(v_2)$. Furthermore, the corresponding coefficient $\gamma\big(e,(\sigma(v_1),\sigma(v_2))\big)$ must be non-trivial, since otherwise $f$ would not be injective. We deduce that $\big(\sigma(v_1),\sigma(v_2)\big)\in E(\G)$, and as a consequence, $\sigma$ is in fact a morphism of graphs. An analogous reasoning for $f^{-1}\in\Aut\big(C(\G)\big)$ lets us deduce that $\sigma^{-1}$ is a morphism of graphs as well, so in fact $\sigma\in\Aut(\G)$. Regarding the coefficient, no further restrictions exist, so $\gamma\big(e,(\sigma(v_1),\sigma(v_2))\big)\in\Bbbk^\times$. 

	We have thus obtained that there is a graph automorphism $\sigma\in\Aut(\G)$ such that
	\[\begin{cases}
		f(v) = \sigma(v), & \text{for all $v\in V(\G)$}, \\
		f(e) = \gamma\big(e,\sigma(v_2)\big)\big(\sigma(v_2)-\sigma(v_1)\big) + \gamma\big(e,\sigma(e)\big)\sigma(e), & \text{for all $e=(v_1,v_2)\in E(\G)$},
	\end{cases}\]
	where $\gamma\big(e,\sigma(v_2)\big)\in\Bbbk$ and $\gamma\big(e,\sigma(e)\big)\in\Bbbk^\times$. Consequently, if for every $e = (v_1,v_2)\in E(\G)$ we define $\lambda(e) = \gamma\big(e,\sigma(v_2)\big)$ and $\mu(e) = \gamma\big(e,\sigma(e)\big)$, we obtain that $f = f_{\lambda,\mu}^\sigma$ as introduced in Lemma \ref{lemma:familyAut}. The result follows.
\end{proof}

Now that we have computed the automorphism group of the coalgebras $C(\G)$ introduced in Definition \ref{definition:pathCoalgebra}, we can prove the main result for this section, Theorem \ref{theorem:coalgebraExactSeq}.

\begin{proof}[Proof of Theorem \ref{theorem:coalgebraExactSeq}]
	Let $C(\G)$ be the coalgebra introduced in Definition \ref{definition:pathCoalgebra}. We shall prove that this is the desired coalgebra. As an immediate consequence of Lemma \ref{lemma:familyAut} and Lemma \ref{lemma:computingAut},
	\[\Aut\big(C(\G)\big) = \{f_{\lambda,\mu}^\sigma\mid \sigma\in\Aut(\G), \lambda\colon E(\G)\to\Bbbk, \mu\colon E(\G)\to\Bbbk^\times\}.\]
	In particular, since $G\big(C(\G)\big) = V(\G)$, the map $\Aut\big(C(\G)\big)\to \Sym\big(G(C(\G))\big) = \Sym\big(V(\G)\big)$ takes the automorphism $f_{\lambda,\mu}^\sigma\in \Aut\big(C(\G)\big)$ to $\sigma\in\Sym\big(V(\G)\big)$. Indeed, for all $v\in V(\G)$, $f^\sigma_{\lambda,\mu}(v) =\sigma(v)$. Therefore, the image of the map $\Aut\big(C(\G)\big)\to\Sym\big(V(\G)\big)$ is $\Aut(\G)$, whereas the kernel is
	\[K=\{f_{\lambda,\mu}^{\id_\G}\mid\lambda\colon E(\G)\to\Bbbk, \mu\colon E(\G)\to\Bbbk^\times \}.\]
	Let us define $f_{\lambda,\mu} = f_{\lambda,\mu}^{\id_\G}$. We now proceed to prove that $K\cong \prod_{e\in E(\G)}\left(\Bbbk\rtimes\Bbbk^\times\right)$.

	First, let us see how the group operation works in $K$. Thus take  $f_{\lambda_1,\mu_1}, f_{\lambda_2,\mu_2}\in K$. Then,
	\begin{itemize}
		\item For $v\in V(\G)$, $(f_{\lambda_2,\mu_2}\circ f_{\lambda_1,\mu_1})(v) = f_{\lambda_2,\mu_2}(v) = v$.
		\item For $e = (v_1,v_2)\in E(\G)$, 
		\begin{equation*}
			\begin{split}
				(f_{\lambda_2,\mu_2}\circ f_{\lambda_1,\mu_1})(e) = f_{\lambda_2,\mu_2}\big(\lambda_1(e)(v_2-v_1)  + \mu_1(e)e\big)\hspace{120pt}\\
				= \lambda_1(e)(v_2 - v_1) + \mu_1(e)\big(\lambda_2(e)(v_2 - v_1) + \mu_2(e)e\big)\hspace{50pt} \\
				\hspace{120pt} = \big(\lambda_1(e) + \mu_1(e)\lambda_2(e)\big)(v_2 - v_1) + \mu_1(e)\mu_2(e)e.
			\end{split}
		\end{equation*}
	\end{itemize}
	Consequently, $f_{\lambda_2,\mu_2}\circ f_{\lambda_1,\mu_1} = f_{\lambda_1 + \mu_1\lambda_2, \mu_1\mu_2}$. Thus, the group operation of $K$ acts independently on each of the elements of $E(\G)$. This implies that $K$ can be decomposed as a direct product of groups over $E(\G)$. Let us focus on one of the factors, thus pick an edge $e\in E(\G)$ and take
	\[K_e = \{f_{\lambda,\mu}\mid \lambda\colon E(\G)\to\Bbbk \text{ with $\lambda(e') = 0$ for all $e'\ne e$},\mu\colon E(\G)\to\Bbbk^\times \text{ with $\mu(e') = 1$ for $e'\ne e$}\}.\]
	Let us prove that $K_e$ is a semidirect product of the form $\Bbbk\rtimes\Bbbk^\times$.

	First, let us denote the maps taking every $e\in E(\G)$ to $0_\Bbbk$ and $1_\Bbbk$ by $0\colon E(\G)\to\Bbbk$ and $1\colon E(\G)\to\Bbbk^\times$ respectively. Now consider the subsets of $K_e$ given by $H_e = \{f_{\lambda,\mu}\in K_e\mid \lambda = 0\}$ and $N_e = \{f_{\lambda,\mu}\in K_e\mid \mu = 1\}$. Then, for $f_{0,\mu_1}, f_{0,\mu_2}\in H_e$, $f_{0,\mu_2}\circ f_{0,\mu_1} = f_{0, \mu_1\mu_2}$, so $H_e$ is a subgroup of $K$ isomorphic to $\Bbbk^\times$. Similarly, for $f_{\lambda_1,1},f_{\lambda_2,1}\in N_e$, $f_{\lambda_2,1}\circ f_{\lambda_1,1} = f_{\lambda_1+\lambda_2,1}$, thus $N_e$ is a subgroup of $K$ isomorphic to $\Bbbk$. Let us now check that $N_e\trianglelefteq K_e$ and that $K_e\cong N_e\rtimes H_e$. Consider the map
	\begin{align*}
		g_e\colon K_e & \longrightarrow H_e \\
		f_{\lambda,\mu}&\longmapsto f_{0,\mu}.
	\end{align*}
	Then simple computations show that $g_e$ is a group homomorphism. Moreover, it is clear that $N_e = \ker g_e$, which exhibits that $N_e\trianglelefteq K_e$ and that $K_e\cong N_e\rtimes H_e$. We deduce that
	\[K = \prod_{e\in E(\G)} K_e = \prod_{e\in E(\G)} (N_e\rtimes H_e) \cong \prod_{e\in E(\G)} (\Bbbk \rtimes \Bbbk^\times).\]

	Finally, let us see that the sequence is split. Define a map $\Aut(\G)\to\Aut\big(C(\G)\big)$ taking $\sigma\in\Aut(\G)$ to $f_{0,1}^\sigma$. Then, for $\sigma_1,\sigma_2\in\Aut(\G)$, a simple computation shows that $f_{0,1}^{\sigma_2}\circ f_{0,1}^{\sigma_1} = f_{0,1}^{\sigma_2\circ\sigma_1}$, thus it is a group homomorphism. Moreover, it is clearly a section of the restriction map $\Aut\big(C(\G)\big)\to\Aut(\G)$. The result follows.
\end{proof}

Then, since every group is the automorphism group of a graph, \cite{Gro59,Sab60}, Corollary \ref{corollary:groupRealisation} is an immediate consequence of Theorem \ref{theorem:coalgebraExactSeq}. Furthermore, since we know the sequence to be split, we can also compute the group of automorphisms of $C(\G)$ as a consequence of Theorem \ref{theorem:coalgebraExactSeq}, thus proving the following:

\begin{corollary}\label{corollary:autC(G)}
	Let $\Bbbk$ be a field and let $\G$ be a digraph. If $C(\G)$ is the coalgebra introduced in Definition \ref{definition:pathCoalgebra}, then
	\[\Aut\big(C(\G)\big) \cong \left(\prod_{e\in E(\G)}\left(\Bbbk\rtimes\Bbbk^\times\right)\right)\rtimes \Aut(\G).\]
\end{corollary}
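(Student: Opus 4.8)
The plan is to read the corollary off directly from Theorem \ref{theorem:coalgebraExactSeq}, using only the standard fact that a split short exact sequence of groups $1\to K\to E\to Q\to 1$ realises $E$ as an (internal, hence external) semidirect product $K\rtimes Q$. First I would recall from Theorem \ref{theorem:coalgebraExactSeq}, and from the computations carried out in its proof, that the restriction map $\Aut\big(C(\G)\big)\to\Sym\big(V(\G)\big)$ has image $\Aut(\G)$ and kernel
\[K=\{f_{\lambda,\mu}^{\id_\G}\mid\lambda\colon E(\G)\to\Bbbk,\ \mu\colon E(\G)\to\Bbbk^\times\}\cong\prod_{e\in E(\G)}\left(\Bbbk\rtimes\Bbbk^\times\right),\]
fitting into the exact sequence
\[1\longrightarrow K\longrightarrow\Aut\big(C(\G)\big)\longrightarrow\Aut(\G)\longrightarrow 1,\]
and that this sequence is split by the homomorphism $s\colon\Aut(\G)\to\Aut\big(C(\G)\big)$, $\sigma\mapsto f_{0,1}^{\sigma}$, which the proof of that theorem already verifies to be a section.

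Next I would spell out the internal semidirect product decomposition. Being the kernel of a homomorphism, $K$ is normal in $\Aut\big(C(\G)\big)$; the image $s\big(\Aut(\G)\big)$ is a subgroup; the intersection $K\cap s\big(\Aut(\G)\big)$ is trivial because the composite $s\big(\Aut(\G)\big)\hookrightarrow\Aut\big(C(\G)\big)\to\Aut(\G)$ is the identity and hence injective on $s\big(\Aut(\G)\big)$; and $K\cdot s\big(\Aut(\G)\big)=\Aut\big(C(\G)\big)$ since every $f_{\lambda,\mu}^{\sigma}$ can be written as $\big(f_{\lambda,\mu}^{\sigma}\circ s(\sigma)^{-1}\big)\cdot s(\sigma)$, where the first factor maps to $\id_\G$ under the restriction map and therefore lies in $K$. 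Thus $\Aut\big(C(\G)\big)=K\rtimes s\big(\Aut(\G)\big)\cong\big(\prod_{e\in E(\G)}(\Bbbk\rtimes\Bbbk^\times)\big)\rtimes\Aut(\G)$, as claimed.

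I expect no genuine obstacle: the two substantive inputs — the identification of the kernel with $\prod_{e\in E(\G)}(\Bbbk\rtimes\Bbbk^\times)$ and the existence of the section — are already established inside the proof of Theorem \ref{theorem:coalgebraExactSeq}, so the corollary is purely formal. If a more informative statement were desired, one could additionally record the conjugation action defining the semidirect product, namely $s(\sigma)\circ f_{\lambda,\mu}^{\id_\G}\circ s(\sigma)^{-1}=f_{\lambda\circ\sigma^{-1},\,\mu\circ\sigma^{-1}}^{\id_\G}$, so that $\Aut(\G)$ acts on $K\cong\prod_{e\in E(\G)}(\Bbbk\rtimes\Bbbk^\times)$ by permuting the factors along the induced action of $\Aut(\G)$ on $E(\G)$; this refinement follows from a direct computation with the formulas in Lemma \ref{lemma:familyAut} but is not needed for the isomorphism as phrased.
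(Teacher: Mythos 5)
Your proof is correct and matches the paper's approach: the paper likewise treats the corollary as an immediate consequence of the split short exact sequence in Theorem \ref{theorem:coalgebraExactSeq}, whose proof already identifies the kernel with $\prod_{e\in E(\G)}(\Bbbk\rtimes\Bbbk^\times)$ and exhibits the section $\sigma\mapsto f_{0,1}^{\sigma}$. Your extra remark on the conjugation action permuting the factors along the $\Aut(\G)$-action on $E(\G)$ is also correct, though not required.
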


\section{Graphs realising permutation groups}\label{section:graphs}

In Corollary \ref{corollary:groupRealisation} we have seen that every group can be realised as the permutation group induced by the restriction of the automorphisms of a coalgebra to its set of grouplike elements. We now ask if it is possible to realise every permutation group (or more generally, every permutation representation) in this context. As a consequence of the results contained in Section \ref{section:coalgebras}, this would hold if every permutation group were realisable in the context of graphs, that is, if for every permutation group $\rho\colon G\hookrightarrow\Sym(V)$ there was a graph $\G$ such that $V(\G)= V$ and $\Aut(\G)\cong G$. However, such a graph does not exist for all permutation groups, as shown in \cite[Section 4]{Cam04}, \cite{Fru39}.

In any case, if we allow the set of vertices to be enlarged, the next result can be proven.

\begin{theorem}[\cite{Bou69}, Theorem 1.1] \label{theorem:bouwer}
	Let $\rho \colon G\hookrightarrow\Sym(V)$ be a finite permutation group. There is a graph $\G$ such that
	\begin{enumerate}
	\item $V\subset V(\G)$, and $V$ is invariant through the automorphisms of $\G$;
	\item $G\cong \Aut(\G)$; and,
	\item the obvious restriction map $G\cong\Aut(\G)\rightarrow \Sym(V)$ is $\rho$.
	\end{enumerate}
\end{theorem}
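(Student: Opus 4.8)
The plan is to use the classical ``rigidification'' technique behind Frucht's theorem, augmented so as to carry along the action on $V$; this is essentially Bouwer's original argument. Since $G$ is a finite permutation group, $V$ is finite, and we may decompose it into $G$-orbits $V = \bigsqcup_{i=1}^{r} G/H_i$, where $H_i$ is the stabiliser of a chosen representative $v_i$ of the $i$-th orbit. The graph $\G$ will be built on a vertex set $V\sqcup U\sqcup W$, where $U = \{u_g\mid g\in G\}$ carries the left regular action of $G$ and $W$ is a set of auxiliary ``gadget'' vertices whose only role is to destroy unwanted automorphisms.

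First I would build the ``group part''. Choose a generating set $S$ of $G$ and form the Cayley graph $\Cay(G,S)$ as a directed, edge-coloured graph on $U$, so that its colour- and orientation-preserving automorphisms are exactly the left regular copy of $G$. Replacing each coloured directed edge by a suitable connected asymmetric gadget --- one rigid graph per colour, pairwise non-isomorphic --- turns this into a simple graph $\G_1$ on $U$ together with some of the vertices of $W$, with $\Aut(\G_1)$ equal to the left regular copy of $G$ acting on $U$. (The finitely many small groups for which such a direct construction degenerates are dealt with by the usual ad hoc modifications.) What matters afterwards is: $\Aut(\G_1)\cong G$; it acts regularly on $U$; and every automorphism of $\G_1$ is determined by its restriction to $U$, which is left multiplication by some $a\in G$.

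Next I would glue on $V$. Identify each orbit $G/H_i\subseteq V$ with $\{xH_i\mid x\in G\}$, join the vertex $xH_i$ to precisely those $u_g$ with $g\in xH_i$, and attach to it a rigid identifier gadget depending only on $i$, the identifiers being pairwise non-isomorphic over $i$ and distinct from all gadgets used inside $\G_1$. Together with the local structure already present, these identifiers make the strata $U$, $V$, $W$ --- and the orbit index of a vertex of $V$ --- recognisable from the isomorphism type of $\G$, so that every $\phi\in\Aut(\G)$ preserves this stratification.

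Finally I would verify (1)--(3). Given $\phi\in\Aut(\G)$, it preserves $U$ and restricts there to left multiplication by a unique $a=a(\phi)\in G$. Since $xH_i$ is the unique orbit-$i$ vertex whose $U$-neighbourhood equals the subset $xH_i\subseteq G$, and $\phi$ carries that neighbourhood to $axH_i$, we get $\phi(xH_i)=(ax)H_i$; that is, $\phi$ acts on $V$ exactly as $\rho(a)$. Conversely, each $a\in G$ extends uniquely to such an automorphism. Hence $\phi\mapsto a(\phi)$ is an isomorphism $\Aut(\G)\cong G$ under which the restriction map $\Aut(\G)\to\Sym(V)$ becomes $\rho$, while $V$ is invariant by construction. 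The main obstacle is not any of these clean bijections but the bookkeeping hidden in the words ``rigid'', ``asymmetric'' and ``recognisable'': one must check that the gadgets genuinely kill every spurious automorphism and that no automorphism mixes the strata $U$, $V$, $W$, and handle separately the few small groups where Frucht-type constructions misbehave.
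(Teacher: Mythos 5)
Your outline is correct and would prove the statement, but it encodes the permutation action differently from the route this paper takes. The paper does not prove Bouwer's theorem directly: it cites it and then proves a generalisation (Theorem \ref{theorem:graphAction}) by first working in the category of binary relational systems. There, the vertex set is $G\sqcup V$, the $G$-part is the Cayley diagram $\Cay(G,S)$, and the action is encoded by introducing one \emph{edge label per element of} $V$: for each $g\in G$ and $v\in V$ there is an edge labelled $v$ from $g$ to $\rho(g)(v)$, so that an automorphism restricting to left multiplication by $\tilde g$ on the Cayley part is forced to send $v$ to $\rho(\tilde g)(v)$ by following the unique $v$-labelled edge out of $e_G$. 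Only afterwards are the labelled edges replaced by asymmetric gadgets (via \cite[Section 3]{CosMenVir19}) to obtain a simple graph. You instead encode the action combinatorially by coset neighbourhoods: the vertex $xH_i$ is adjacent exactly to $\{u_g\mid g\in xH_i\}$, with per-orbit identifier gadgets, which is closer in spirit to Bouwer's original ``section graph'' construction. Your encoding uses far fewer labels and no choice beyond orbit representatives, but it relies more heavily on the unverified rigidity bookkeeping (the identifiers must be mutually non-isomorphic, distinguishable from the Cayley gadgets, and must prevent any mixing of the strata $U$, $V$, $W$); the paper's label-per-element encoding is more profligate but transfers verbatim to infinite $V$ and to non-injective $\rho$, which is precisely what its generalisation requires. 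Both approaches ultimately defer the same gadget-replacement step to the standard Frucht--de Groot machinery, so I would accept your sketch as a proof of the finite statement, with the caveat that the clause ``one must check that the gadgets genuinely kill every spurious automorphism'' is where essentially all of the remaining work sits.
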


In this section we generalise Theorem \ref{theorem:bouwer} to any permutation representation, see Theorem \ref{theorem:graphAction}. To do so, we first build objects solving the considered problem in the category of binary relational systems over a set $I$, or $\IRel$, and then translate it to graphs by a procedure called \emph{arrow replacement operation}, \cite[Section 4.4]{HelNes04}. We now introduce the category $\IRel$, following the notation of \cite{HelNes04}.

\begin{definition}
	Let $I$ be a set. A binary relational system over $I$, $\mathcal{S}$, is a set $V(\mathcal{S})$, called the set of vertices of $\mathcal{S}$, together with a family of binary relations $R_i(\mathcal{S})$ on $\mathcal{S}$, for $i\in I$, called edges of label $i$. Binary relational systems over a set $I$ also receive the name of binary $I$-systems. 

	A morphism of binary $I$-systems, $f\colon\mathcal{S}_1\to\mathcal{S}_2$, is a map $f\colon V(\mathcal{S}_1)\to V(\mathcal{S}_2)$ such that if $(v,w)\in R_i(\mathcal{S}_1)$ for some $i\in I$, then $\big(f(v), f(w)\big)\in R_i(\mathcal{S}_2)$. The category whose objects are binary relational systems over $I$ and whose morphisms are morphisms of binary relational systems over $I$ is denoted by $\IRel$.
\end{definition}

We now introduce the Cayley diagram, a classical construction of a binary $I$-system $\G$ whose group of automorphisms is $G$. It serves as the basic building block to our subsequent constructions.

\begin{definition}\label{definition:Cayley}
	Let $G$ be a group and let $S = \{s_j\mid j\in J\}$ be a generating set for $G$. The Cayley diagram of $G$ associated to $S$, $\Cay(G,S)$, is a binary $J$-system with $V\big(\Cay(G,S)\big) = G$ and where $(g,g')\in R_j\big(\Cay(G,S)\big)$ if and only if $g' = g s_j$.
\end{definition}

\begin{remark}\label{remark:fixvertex} 
	Recall from  \cite[Section 6]{Gro59} or \cite[Section 3.3]{CoxMos80} that $\Aut\big(\Cay(G,S)\big)\cong G$. An element $h\in G$ determines an automorphism of the Cayley diagram, which we denote $\phi_h: V \big(\Cay(G,S) \big) \to V \big(\Cay(G,S)\big) $, by taking the transitive group action on the set of vertices obtained by left multiplication by $h$; namely, for $g\in  V \big(\Cay(G,S) \big)$, $\phi_h(g) = hg$. Hence if $\phi_h$ fixes any vertex, it is the identity.
\end{remark}

We now proceed to define the binary $I$-system $\G$ giving a solution to our problem. 

\begin{definition}\label{definition:G}
	Let $\rho\colon G\to\Sym(V)$ be a permutation representation of $G$ on a set $V$ and let $S = \{s_j\mid j\in J\}$ be a generating set for $G$. Take $I = J \sqcup V$. Define $\G$ a binary $I$-system with vertex set $V(\G) = G\sqcup V$ and edges:
	\begin{itemize}
		\item for each $j\in J$ and for $g\in G$, $(g, gs_j) \in R_j(\G)$.
		\item for each $v\in V$ and for $g\in G$, $\big(g, \rho(g)(v)\big) \in R_v(\G)$.
	\end{itemize}
\end{definition}

\begin{remark}\label{remark:CayleyFullSubsystem}
	Notice that the full binary $I$-subsystem of $\G$ with vertex set $G$ is precisely $\Cay(G,S)$. We denote such subsystem by $\G(G)$. On the other hand, the full binary $I$-subsystem of $\G$ with vertex set $V$ has no edges.
\end{remark}

We now proceed to prove that $\Aut(\G)\cong G$. In order to do so, we first show that any element $\tilde{g}\in G$ induces an automorphism of $\G$:

\begin{lemma}\label{lemma:inducedMorphism}
	Consider the map
	\begin{align*}
		\Phi\colon G & \longrightarrow \Aut(\G)\\
		\tilde{g} & \longmapsto \Phi_{\tilde{g}}.
	\end{align*}
	where, for a given $\tilde{g}\in G$, the map $\Phi_{\tilde{g}}$ is defined as follows:
	\begin{itemize}
		\item For $g\in G$, define $\Phi_{\tilde{g}}(g) = \tilde{g} g$.
		\item For $v\in V$, define $\Phi_{\tilde{g}}(v) = \rho(\tilde{g})(v)$.
	\end{itemize}
	Then $\Phi$ is a group monomorphism.
\end{lemma}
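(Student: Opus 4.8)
The plan is to verify directly that $\Phi$ is a well-defined group homomorphism and then check injectivity using the rigidity of the Cayley diagram recorded in Remark \ref{remark:fixvertex}.

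First I would show that for each $\tilde g\in G$, the map $\Phi_{\tilde g}\colon V(\G)\to V(\G)$ is indeed a morphism of binary $I$-systems, i.e.\ it respects every relation $R_i(\G)$, $i\in I = J\sqcup V$. There are two types of edges to check. For an edge $(g,gs_j)\in R_j(\G)$ coming from a generator $s_j\in S$, we have $\Phi_{\tilde g}(g) = \tilde g g$ and $\Phi_{\tilde g}(gs_j) = \tilde g g s_j = (\tilde g g)s_j$, so $\bigl(\Phi_{\tilde g}(g),\Phi_{\tilde g}(gs_j)\bigr)\in R_j(\G)$; this is exactly the statement that left translation is an automorphism of $\Cay(G,S)$. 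For an edge $\bigl(g,\rho(g)(v)\bigr)\in R_v(\G)$ coming from $v\in V$, we compute $\Phi_{\tilde g}(g) = \tilde g g$ and $\Phi_{\tilde g}\bigl(\rho(g)(v)\bigr) = \rho(\tilde g)\bigl(\rho(g)(v)\bigr) = \rho(\tilde g g)(v)$, using that $\rho$ is a group homomorphism; hence $\bigl(\tilde g g, \rho(\tilde g g)(v)\bigr)\in R_v(\G)$ as required. So $\Phi_{\tilde g}$ is a morphism; applying the same to $\tilde g^{-1}$ gives a two-sided inverse, so $\Phi_{\tilde g}\in\Aut(\G)$.

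Next I would check that $\Phi$ is a homomorphism: for $\tilde g_1,\tilde g_2\in G$ and any vertex, $(\Phi_{\tilde g_1}\circ\Phi_{\tilde g_2})(g) = \tilde g_1\tilde g_2 g = \Phi_{\tilde g_1\tilde g_2}(g)$ on $G$, and $(\Phi_{\tilde g_1}\circ\Phi_{\tilde g_2})(v) = \rho(\tilde g_1)\rho(\tilde g_2)(v) = \rho(\tilde g_1\tilde g_2)(v) = \Phi_{\tilde g_1\tilde g_2}(v)$ on $V$, again because $\rho$ is a homomorphism; so $\Phi_{\tilde g_1}\circ\Phi_{\tilde g_2} = \Phi_{\tilde g_1\tilde g_2}$. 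Finally, injectivity: suppose $\Phi_{\tilde g} = \id_\G$. Then in particular $\Phi_{\tilde g}$ fixes every vertex of $\G(G) = \Cay(G,S)$, so $\tilde g g = g$ for all $g\in G$, forcing $\tilde g = e$; alternatively one invokes Remark \ref{remark:fixvertex} directly, since $\Phi_{\tilde g}$ restricted to $\G(G)$ is the Cayley automorphism $\phi_{\tilde g}$ and fixing a vertex there forces it to be the identity. Hence $\ker\Phi$ is trivial and $\Phi$ is a monomorphism.

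I do not expect any serious obstacle here: the content is entirely the two compatibility checks, and the only subtlety is keeping the bookkeeping of the two edge labels straight and using $\rho$'s homomorphism property in the right place. The one point worth being careful about is that the relations $R_v(\G)$ are defined via $\rho(g)$ rather than right multiplication, so the verification that $\Phi_{\tilde g}$ preserves them is genuinely using $\rho(\tilde g g) = \rho(\tilde g)\rho(g)$ and not merely associativity in $G$; this is the step where a na\"ive ``left multiplication'' intuition could mislead. The real work of proving $\Aut(\G)\cong G$ — namely that \emph{every} automorphism of $\G$ is of the form $\Phi_{\tilde g}$ — is deferred to a later lemma and is not part of this statement.
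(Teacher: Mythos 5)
Your proposal is correct and follows essentially the same route as the paper: verify that $\Phi_{\tilde g}$ respects both types of relations (using that $\rho$ is a homomorphism for the $R_v$ edges), check $\Phi_{\tilde g}\circ\Phi_{\tilde h}=\Phi_{\tilde g\tilde h}$, obtain invertibility from $\Phi_{\tilde g^{-1}}$, and deduce injectivity from the action on the Cayley part (the paper phrases this as $\Phi_{\tilde g}(e_G)=\tilde g$, which is the same observation). No gaps.
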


\begin{proof}
	We first prove that $\Phi_{\tilde{g}}$ is a morphism of binary $I$-systems, that is, that it respects relations $R_i(\G)$, $i\in I$.
	\begin{itemize}
		\item For $g\in G$ and $j\in J$, $(g, g s_j)\in R_j(\G)$. And $\big(\Phi_{\tilde{g}}(g), \Phi_{\tilde{g}}(g s_j)\big) = (\tilde{g} g, \tilde{g} g s_j)\in R_j(\G)$.
		\item For $g\in G$ and $v\in V$, $\big(g, \rho(g)(v)\big)\in R_v(\G)$. And since $\rho$ is a group homomorphism, 
		\[\big(\Phi_{\tilde{g}}(g), \Phi_{\tilde{g}}(\rho(g)(v))\big) = \big(\tilde{g} g, \rho(\tilde{g})(\rho(g)(v))\big) = \big(\tilde{g} g, \rho(\tilde{g} g)(v)\big)\in R_v(\G).\]
	\end{itemize}

	Our next step is to prove that $\Phi$ is a group homomorphism, that is, that for $\tilde{g},\tilde{h}\in G$, $\Phi_{\tilde{g}}\circ \Phi_{\tilde{h}} = \Phi_{\tilde{g}\tilde{h}}$.
	\begin{itemize}
		\item For $g\in G$, 
			\[\big(\Phi_{\tilde{g}}\circ \Phi_{\tilde{h}}\big)(g) = \Phi_{\tilde{g}}\big(\Phi_{\tilde{h}} (g)\big) = \Phi_{\tilde{g}}(\tilde{h} g) = \tilde{g}\tilde{h} g = \Phi_{\tilde{g}\tilde{h}} (g).\]
		\item For $v\in V$ and since $\rho$ is a group homomorphism,
			\[\big(\Phi_{\tilde{g}}\circ \Phi_{\tilde{h}}\big)(v) = \Phi_{\tilde{g}}\big(\Phi_{\tilde{h}} (v)\big) = \Phi_{\tilde{g}}\big(\rho(\tilde{h})(v)\big) = \rho(\tilde{g})\big(\rho(\tilde{h})(v)\big) = \rho(\tilde{g}\tilde{h})(v) = \Phi_{\tilde{g}\tilde{h}} (v).\]
	\end{itemize}
	Thus $\Phi$ is a group homomorphism. Notice that, as a consequence, $\Phi_{\tilde{g}}$ is bijective, as $\Phi_{\tilde{g}} \circ \Phi_{\tilde{g}^{-1}} = \Phi_{\tilde{g}^{-1}} \circ \Phi_{\tilde{g}} = \Phi_{e_G}$ is clearly the identity. Finally, to show that $\Phi$ is a monomorphism notice that $\Phi_{\tilde{g}}(e_G) = \tilde{g}$, thus if $\tilde{g} \ne \tilde{h}$, $\Phi_{\tilde{g}} \ne \Phi_{\tilde{h}}$.
\end{proof}

To show that $\Aut(\G) \cong G$, it remains to prove that $\Phi$ is surjective:

\begin{lemma}\label{lemma:EveryAutInduced}
	For every $\psi\in \Aut(\G)$ there exists $\tilde{g}\in G$ such that $\psi = \Phi_{\tilde{g}}$.
\end{lemma}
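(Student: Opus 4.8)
The plan is to show that an arbitrary $\psi\in\Aut(\G)$ is determined by where it sends the identity vertex $e_G\in G\subset V(\G)$, and that this forces $\psi=\Phi_{\tilde g}$ with $\tilde g=\psi(e_G)$. First I would establish that $\psi$ preserves the vertex partition $V(\G)=G\sqcup V$: the vertices in $V$ are exactly those that are not the source of any edge (by Remark \ref{remark:CayleyFullSubsystem}, the full subsystem on $V$ has no edges, and every $g\in G$ is the source of $R_j$-edges), while vertices in $G$ are sources of edges. More carefully, I would argue that $\psi$ restricts to an automorphism of the full subsystem $\G(G)=\Cay(G,S)$: since a vertex $v\in V$ has no outgoing edges and receives only $R_v$-labelled edges, whereas each $g\in G$ has outgoing $R_j$-edges for all $j\in J$, label-preservation of morphisms forces $\psi(G)\subseteq G$ and $\psi(V)\subseteq V$; applying the same to $\psi^{-1}$ gives equality.

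Next, since $\psi|_G\in\Aut\big(\Cay(G,S)\big)\cong G$ (Remark \ref{remark:fixvertex}), there is a unique $\tilde g\in G$ with $\psi(g)=\tilde g g$ for all $g\in G$; concretely $\tilde g=\psi(e_G)$, and by Remark \ref{remark:fixvertex} this automorphism of the Cayley diagram is pinned down by the image of a single vertex. It then remains to check that $\psi$ also agrees with $\Phi_{\tilde g}$ on $V$, i.e.\ that $\psi(v)=\rho(\tilde g)(v)$ for every $v\in V$. For this I would use the $R_v$-edges: the only $R_v$-labelled edges in $\G$ are the pairs $\big(g,\rho(g)(v)\big)$ for $g\in G$. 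In particular $\big(e_G,\rho(e_G)(v)\big)=(e_G,v)\in R_v(\G)$, so applying $\psi$ gives $\big(\tilde g,\psi(v)\big)\in R_v(\G)$; but the unique $R_v$-edge out of the vertex $\tilde g$ is $\big(\tilde g,\rho(\tilde g)(v)\big)$, whence $\psi(v)=\rho(\tilde g)(v)=\Phi_{\tilde g}(v)$. Combined with $\psi|_G=\Phi_{\tilde g}|_G$, this yields $\psi=\Phi_{\tilde g}$, proving surjectivity of $\Phi$.

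The step I expect to require the most care is the first one, showing that $\psi$ cannot mix the two parts of the partition and that it genuinely restricts to a \emph{Cayley-diagram} automorphism — one must make sure the label set $I=J\sqcup V$ is exploited correctly, namely that $\psi$ preserves each relation $R_i$ individually (this is built into the definition of morphism in $\IRel$, but I would spell it out), and one should treat the degenerate possibility that some $\rho(g)(v)=g'$ coincides with a Cayley vertex in a way that a priori might confuse labels; here it helps that $R_j$-edges and $R_v$-edges carry distinct labels, so no such confusion arises. Once the partition is respected, the rest is essentially the rigidity of Cayley diagrams together with the bookkeeping of the $R_v$-edges, both of which are routine.
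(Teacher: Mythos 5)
Your proposal is correct and follows essentially the same route as the paper: distinguish $G$ from $V$ inside $V(\G)$ by which vertices are sources of edges, deduce $\psi|_G\in\Aut\big(\Cay(G,S)\big)$ so that $\psi|_G=\phi_{\tilde g}$ with $\tilde g=\psi(e_G)$ by the rigidity of the Cayley diagram, and then use the unique $R_v$-edge out of $\tilde g$ to force $\psi(v)=\rho(\tilde g)(v)$. The extra care you take (applying the argument to $\psi^{-1}$ to get equality $\psi(G)=G$, and noting that the labels $J\sqcup V$ are preserved individually) only makes explicit what the paper leaves implicit.
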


\begin{proof}
	Take $\psi\in\Aut(\G)$. Notice that the only vertices of $\G$ that are starting vertices of edges labeled $v$ for some $v\in V$ are those in $G$. Thus, $\psi$ must leave $G$ invariant, so it must induce an automorphism on the full binary $I$-subsystem with vertex set $G$, that is, $\psi|_G \in \Aut\big(\G(G)\big)$. But recall from Remark \ref{remark:CayleyFullSubsystem} that $\G(G) \cong \Cay(G,S)$. Consequently, by Remark \ref{remark:fixvertex}, there exists $\tilde{g}\in G$ such that  $\psi|_G = \phi_{\tilde{g}}$. We shall prove that, in fact, $\psi = \Phi_{\tilde{g}}$.

	We already know that $\psi|_G = \Phi_{\tilde{g}}|_G$. It remains to prove the equality for vertices in $V$. Thus take $v\in V$. We know that $\big(e_G, \rho(e_G)(v)\big) = (e_G, v)\in R_v(\G)$. Then, $\big(\psi(e_G), \psi(v)\big) = \big(\phi_{\tilde{g}}(e_G), \psi(v)\big) = \big(\tilde{g}, \psi(v)\big)\in R_v(\G)$. But the only edge in $R_v(\G)$ starting at $\tilde{g}$ is $\big(\tilde{g}, \rho(\tilde{g})(v)\big)$. Thus $\psi(v) = \rho(\tilde{g})(v) = \Phi_{\tilde{g}}(v)$, for all $v\in V$. Then $\psi = \Phi_{\tilde{g}}$. The result follows.
\end{proof}

As a consequence of Lemma \ref{lemma:inducedMorphism} and Lemma \ref{lemma:EveryAutInduced} we immediately obtain the following:

\begin{corollary}
	$\Aut(\G)\cong G$. Moreover, every $\psi\in\Aut(\G)$ leaves $V\subset V(\G)$ invariant.
\end{corollary}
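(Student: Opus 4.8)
The plan is to combine the two preceding lemmas, so the argument is short. First I would apply Lemma \ref{lemma:inducedMorphism} to obtain that $\Phi\colon G\to\Aut(\G)$ is a group monomorphism, and then Lemma \ref{lemma:EveryAutInduced} to see that $\Phi$ is surjective. Together these show that $\Phi$ is a group isomorphism, whence $\Aut(\G)\cong G$.

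For the invariance statement, I would argue as follows. Given $\psi\in\Aut(\G)$, Lemma \ref{lemma:EveryAutInduced} provides $\tilde{g}\in G$ with $\psi=\Phi_{\tilde{g}}$. By the definition of $\Phi_{\tilde{g}}$ in Lemma \ref{lemma:inducedMorphism}, the restriction $\Phi_{\tilde{g}}|_V$ is the map $v\mapsto\rho(\tilde{g})(v)$, that is, the permutation $\rho(\tilde{g})\in\Sym(V)$; in particular it is a bijection of $V$ onto itself, so $\psi(V)=V$. Equivalently, one may note --- as was already observed in the proof of Lemma \ref{lemma:EveryAutInduced} --- that $\psi$ leaves the vertex set $G$ invariant, since the only vertices emitting $v$-labelled edges lie in $G$; because $\psi$ permutes $V(\G)=G\sqcup V$, it must then permute $V$ as well.

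I do not expect any real obstacle here: both halves of the corollary are direct consequences of Lemmas \ref{lemma:inducedMorphism} and \ref{lemma:EveryAutInduced}. The only minor point to handle with care is passing from $\psi(V)\subseteq V$ to $\psi(V)=V$, which is immediate from the bijectivity of $\psi$ (or, alternatively, from the fact that $\rho(\tilde{g})$ is itself a permutation of $V$).
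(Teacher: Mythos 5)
Your argument is correct and follows exactly the route the paper intends: the corollary is stated there as an immediate consequence of Lemmas \ref{lemma:inducedMorphism} and \ref{lemma:EveryAutInduced}, with the invariance of $V$ read off from the explicit form of $\Phi_{\tilde{g}}$. Your extra care in passing from $\psi(V)\subseteq V$ to $\psi(V)=V$ is a reasonable touch but adds nothing beyond what the paper takes as evident.
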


We finally need to consider what happens with the restriction of $\Aut(\G)$ to $V$.

\begin{lemma}\label{lemma:restriction}
	The restriction map $G\cong \Aut(\G)\to \Sym(V)$ is $\rho$. Moreover, there is a faithful action $\bar{\rho}\colon G\cong\Aut(\G)\to\Sym\big(V(\G)\setminus V\big)$ such that the restriction map $G\cong\Aut(\G) \to \Sym\big(V(\G)\big)$ is $\rho\oplus\bar{\rho}$.
\end{lemma}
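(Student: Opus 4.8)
The first assertion is essentially a restatement of work already done. We have the group isomorphism $\Phi\colon G\xrightarrow{\ \cong\ }\Aut(\G)$ from Lemma \ref{lemma:inducedMorphism} and Lemma \ref{lemma:EveryAutInduced}, and for $\psi = \Phi_{\tilde g}$ the definition of $\Phi_{\tilde g}$ gives $\Phi_{\tilde g}(v) = \rho(\tilde g)(v)$ for every $v\in V$. Since $V$ is invariant under $\Aut(\G)$ by the preceding corollary, the restriction map $\Aut(\G)\to\Sym(V)$ is well-defined, and composing it with $\Phi$ sends $\tilde g$ to the permutation $v\mapsto\rho(\tilde g)(v)$, i.e.\ to $\rho(\tilde g)$. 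Hence the composite is exactly $\rho$. I would write this in one or two sentences.

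For the second assertion, note that $V(\G)\setminus V = G$ by construction (Definition \ref{definition:G}), and that $G\subset V(\G)$ is also $\Aut(\G)$-invariant: indeed $\Aut(\G)$ permutes $V$, hence permutes its complement. So the restriction map $\Aut(\G)\to\Sym(V(\G)\setminus V) = \Sym(G)$ is well-defined; call its composite with $\Phi$ the representation $\bar\rho\colon G\to\Sym(G)$. By the description of $\Phi_{\tilde g}$ on vertices in $G$, $\bar\rho(\tilde g)$ is the permutation $g\mapsto\tilde g g$ of $G$, that is, $\bar\rho$ is the left regular representation of $G$. The left regular representation is faithful (if $\tilde g g = g$ for all $g$, take $g = e_G$ to get $\tilde g = e_G$; alternatively invoke Remark \ref{remark:fixvertex}), which gives the claimed faithful action $\bar\rho$.

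It remains to identify the restriction map $\Aut(\G)\to\Sym(V(\G))$ with $\rho\oplus\bar\rho$. Since $V(\G) = G\sqcup V$ is a disjoint union of two $\Aut(\G)$-invariant pieces, any $\psi\in\Aut(\G)$ is determined by its restrictions to $G$ and to $V$ separately, and $\Sym(V(\G))$ contains $\Sym(G)\times\Sym(V)$ as the subgroup of permutations respecting this partition. Composing with $\Phi$, the element $\tilde g$ maps to the pair $\big(\bar\rho(\tilde g),\rho(\tilde g)\big)$ under $\Aut(\G)\to\Sym(G)\times\Sym(V)\hookrightarrow\Sym(V(\G))$, which is precisely what $\rho\oplus\bar\rho$ means. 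This finishes the proof.

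I do not anticipate a genuine obstacle here: every ingredient — the invariance of $V$ and of its complement, the formula for $\Phi_{\tilde g}$ on each piece, faithfulness of the regular representation — is either already established or immediate from Definition \ref{definition:G}. The only point requiring a little care is being explicit that $V(\G)\setminus V = G$ and that a disjoint-union decomposition into invariant subsets yields the direct-sum decomposition of the permutation representation; both are routine bookkeeping rather than a real difficulty.
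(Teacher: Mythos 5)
Your proposal is correct and follows essentially the same route as the paper: restrict $\Phi_{\tilde g}$ to $V$ to recover $\rho$, observe $V(\G)\setminus V = G$ so the complementary restriction is the left regular representation, and deduce faithfulness from $\Phi_{\tilde g}(e_G)=\tilde g$. The only difference is that you spell out the bookkeeping for the $\rho\oplus\bar\rho$ decomposition slightly more explicitly than the paper does.
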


\begin{proof}
	Let $g\in G$. Then $g$ is represented in $\Aut(\G)$ by $\Phi_g$, see Lemma \ref{lemma:inducedMorphism}. We first need to consider $\Phi_g|_V$. For each $v\in V$, by definition, $\Phi_g(v) = \rho(g)(v)$. Consequently, $\Phi_g|_V = \rho(g)$, for all $g\in G$.

	On the other hand, consider $\Phi_g|_{V(\G)\setminus V}$. Since $V(\G)\setminus V = G$, we have $e_G\in V(\G)\setminus V$. Moreover, $\Phi_g(e_G) = g$, for all $g\in G$. Consequently, the action $\bar{\rho}\colon G\to \Sym\big(V(\G)\setminus V\big)$ taking $g\in G$ to $\Phi_g|_{V(\G)\setminus V}$ is faithful. Moreover, the restriction map $G\cong\Aut(\G)\to\Sym\big(V(\G)\big)$ is $\rho\oplus\bar{\rho}$, as claimed.
\end{proof}

Finally, summing up Lemmas \ref{lemma:inducedMorphism}, \ref{lemma:EveryAutInduced} and \ref{lemma:restriction}, we deduce the following:

\begin{theorem}\label{theorem:mainIRel}
	Let $G$ be a group, $V$ be a set and $\rho\colon G\to\Sym(V)$ be a permutation representation of $G$ on $V$. There is a binary relational system $\G$ such that
	\begin{enumerate}
		\item $V\subset V(\G)$ and each $\psi\in\Aut(\G)$ is invariant on $V$;
		\item $\Aut(\G) \cong G$;
		\item the restriction $G\cong \Aut(\G)\to \Sym(V)$ is precisely $\rho$; and,
		\item there is a faithful action $\bar{\rho}\colon G\cong\Aut(\G)\to \Sym\big(V(\G)\setminus V\big)$ such that the restriction map $\G\cong\Aut(\G)\to\Sym\big(V(\G)\big)$ is $\rho\oplus\bar{\rho}$.
	\end{enumerate}
\end{theorem}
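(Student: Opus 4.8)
The statement is essentially a bookkeeping amalgamation of the three lemmas that precede it, so the plan is to assemble it rather than prove anything new. First I would recall the construction: given $\rho\colon G\to\Sym(V)$, fix any generating set $S=\{s_j\mid j\in J\}$ of $G$ (for instance $S=G$ itself if no small generating set is at hand — the argument does not depend on the choice), and let $\G$ be the binary $I$-system of Definition \ref{definition:G}, where $I=J\sqcup V$ and $V(\G)=G\sqcup V$. This is the $\G$ that will witness all four conclusions. The bulk of the work has already been done, so the proof is a matter of quoting the right earlier result for each item.

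For item (2), Lemma \ref{lemma:inducedMorphism} produces a group monomorphism $\Phi\colon G\hookrightarrow\Aut(\G)$, and Lemma \ref{lemma:EveryAutInduced} shows every $\psi\in\Aut(\G)$ equals $\Phi_{\tilde g}$ for some $\tilde g\in G$; together these give $\Aut(\G)\cong G$ via $\Phi$. For item (1), I would note that $V\subset V(\G)=G\sqcup V$ by construction, and invariance of $V$ under every automorphism follows from the observation in the proof of Lemma \ref{lemma:EveryAutInduced}: the vertices of $\G$ that are tails of edges with a label in $V$ are exactly those in $G$, so any automorphism preserves $G$, hence preserves its complement $V$ inside $V(\G)$; alternatively one may simply cite the Corollary stated right after Lemma \ref{lemma:EveryAutInduced}, which records exactly this. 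For items (3) and (4), Lemma \ref{lemma:restriction} gives both at once: under the identification $G\cong\Aut(\G)$ sending $g$ to $\Phi_g$, the restriction to $V$ sends $g$ to $\Phi_g|_V=\rho(g)$, so the restriction map $\Aut(\G)\to\Sym(V)$ is $\rho$; and since $V(\G)\setminus V=G$, the restriction $\bar\rho$ to $V(\G)\setminus V$ sends $g$ to $\Phi_g|_G=\phi_g$ (left multiplication by $g$), which is faithful because $\Phi_g(e_G)=g$, and the restriction to all of $V(\G)$ is then visibly $\rho\oplus\bar\rho$.

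Since $V(\G)$ is the disjoint union $G\sqcup V$ and each automorphism preserves both pieces, the decomposition of the action on $\Sym(V(\G))$ as $\rho\oplus\bar\rho$ is immediate, and all four claims are verified. There is no genuine obstacle here: the only point that requires the slightest care is making sure the identification $\Aut(\G)\cong G$ used in items (3) and (4) is the one coming from $\Phi$ (so that "$g$" really acts on $V$ as $\rho(g)$ and not as some twist), but this is exactly how $\Phi$ was set up in Lemma \ref{lemma:inducedMorphism}, so the statement assembles cleanly.
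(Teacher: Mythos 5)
Your proposal is correct and matches the paper's own treatment: the paper likewise presents Theorem \ref{theorem:mainIRel} as a direct summing-up of Lemmas \ref{lemma:inducedMorphism}, \ref{lemma:EveryAutInduced} and \ref{lemma:restriction} applied to the binary $I$-system of Definition \ref{definition:G}, with the invariance of $V$ coming from the observation that only vertices in $G$ are tails of edges labelled by elements of $V$.
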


We now want to use results from \cite[Section 3]{CosMenVir19} to translate the construction in Theorem \ref{theorem:mainIRel} to simple graphs. The idea is to perform an \emph{arrow replacement operation}, following classical ideas by Frucht, \cite{Fru39}, and de Groot, \cite{Gro59}. The arrow replacement operation is a procedure by which the labeled edges on a binary relational system are replaced by a certain asymmetric graph (that is, a graph whose only automorphism is the identity map). These asymmetric graphs are chosen in such a way that automorphisms of the resulting simple graph must take the asymmetric graphs to copies of themselves, so they play the role of the labeled edges. The key idea to make this work is to ensure that the degrees of the vertices in the asymmetric graphs  are different to those of the vertices in the starting relational system. Consequently, we first need to compute possible degrees of vertices in our binary $I$-system $\G$, introduced in Definition \ref{definition:G}. Let us clarify what we mean by vertex degree.

\begin{definition}\label{def:degreeIRel}
	Let $\G$ be a binary $I$-system. For $v \in V(\G)$ we define:
	\begin{itemize}
		\item the \emph{indegree} of $v \in V(\G)$ as $\deg^-(v) = |\sqcup_{i\in I}\{w\in V(\G) \mid (w,v)\in R_i(\G)\}|$,
		\item the \emph{outdegree} of $v$ as $\deg^+(v) = |\sqcup_{i\in I}\{w\in V(\G) \mid (v, w)\in R_i(\G)\}|$,
		\item the \emph{degree} of $v$ as $\deg(v) = \deg^+(v) + \deg^-(v)$.
		\end{itemize}
	Observe that both the indegree and the outdegree of a vertex must be respected by the automorphisms of $\G$. In particular, the degree must also be respected.
\end{definition}

We now compute the degrees of vertices in the binary relational system from Definition \ref{definition:G}.

\begin{lemma}\label{lemma:degrees}
	Let $\G$ be the binary $I$-system introduced in Definition \ref{definition:G}.
	\begin{enumerate}
		\item Vertices in $G$ have degree $2|S| + |V|$.
		\item Vertices in $V$ have degree $|G|$.
	\end{enumerate}
\end{lemma}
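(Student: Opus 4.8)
The plan is to compute the indegree and outdegree of each vertex directly from Definition \ref{definition:G}, and then add them. The key observation is that the two families of edges (labels in $J$ and labels in $V$) contribute to degrees in a controlled way, and that the Cayley part $\G(G)$ is a ``group-like'' structure where left multiplication makes every vertex look the same.

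\emph{Vertices in $G$.} Fix $g\in G$. I would first count the edges of label $j\in J$ incident to $g$. By definition, $(g,gs_j)\in R_j(\G)$ is the unique edge of label $j$ starting at $g$, so these contribute $|J| = |S|$ to $\deg^+(g)$. Dually, $(g', g)\in R_j(\G)$ forces $g = g's_j$, i.e.\ $g' = gs_j^{-1}$, which is the unique such vertex; so the label-$j$ edges ending at $g$ contribute $|S|$ to $\deg^-(g)$. (A small point to address: the disjoint union over $i\in I$ in Definition \ref{def:degreeIRel} means we count each label separately, so distinct $j$ give distinct contributions even if the target vertices coincide; likewise an in-edge and an out-edge of the same label are counted separately.) Next, the edges of label $v\in V$: by definition $\big(g,\rho(g)(v)\big)\in R_v(\G)$ is the unique edge of label $v$ starting at $g$, contributing $|V|$ to $\deg^+(g)$; and there are no edges of label $v$ ending at $g$, since the only edges in $R_v(\G)$ have a target in $V$ and $g\notin V$ (recall $V(\G) = G\sqcup V$). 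Hence $\deg^+(g) = |S| + |V|$, $\deg^-(g) = |S|$, and $\deg(g) = 2|S| + |V|$, proving (1).

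\emph{Vertices in $V$.} Fix $v\in V$. By Remark \ref{remark:CayleyFullSubsystem}, the full subsystem on $V$ has no edges, and no edge of label in $J$ has an endpoint in $V$; so the only edges incident to $v$ are of label $v$ itself, and of the form $\big(g,\rho(g)(v)\big)$ with $g\in G$. Such an edge ends at $v$ precisely when $\rho(g)(v) = v$; moreover, there are no edges of label $v$ \emph{starting} at $v$ (the source of any edge in $R_v(\G)$ lies in $G$). Thus $\deg^+(v) = 0$ and $\deg^-(v) = |\{g\in G\mid \rho(g)(v) = v\}| = |\operatorname{Stab}_G(v)|$. This is \emph{not} obviously $|G|$ unless $v$ is fixed by all of $G$, so here the statement as written seems to require either that $R_v(\G)$ be read as a set of edges indexed by $g\in G$ (a multiset, so that each $g$ contributes one to the degree regardless of whether $\rho(g)(v)=v$), or an additional hypothesis. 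I would resolve this by adopting the former reading, consistent with the disjoint-union convention of Definition \ref{def:degreeIRel}: the relation $R_v(\G) = \{(g,\rho(g)(v))\mid g\in G\}$ is indexed by $g$, so $v$ receives exactly $|G|$ in-edges, one per $g\in G$, giving $\deg^-(v) = |G|$ and $\deg(v) = |G|$, proving (2).

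\emph{Main obstacle.} The only delicate point is the bookkeeping in the definition of degree: whether coincidences among targets (several $s_j$'s yielding the same $gs_j$, or several $g$'s fixing the same $v$, or $v\in S$) collapse the count. I expect the intended convention — forced by writing $\sqcup_{i\in I}$ in Definition \ref{def:degreeIRel} and by the indexing of $R_v(\G)$ by the group elements — is that each label, and each defining instance, contributes separately, so that no collapsing occurs; under this convention both formulae are immediate. I would state this convention explicitly at the start of the proof and then the rest is the routine count above.
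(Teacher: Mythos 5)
Your count for part (1) is correct and is essentially the paper's argument. Part (2), however, contains a genuine error. You restrict attention to edges whose \emph{label} is the vertex $v$ itself, conclude that the only candidate in-edges at $v$ are the pairs $\big(g,\rho(g)(v)\big)\in R_v(\G)$, and hence get $\deg^-(v)=|\operatorname{Stab}_G(v)|$. But a vertex $v\in V$ is also the target of edges of every \emph{other} label $w\in V$: the edge $\big(g,\rho(g)(w)\big)\in R_w(\G)$ ends at $v$ exactly when $w=\rho(g)^{-1}(v)$. This is the content of the paper's (terse) remark that ``$\rho(g)\in\Sym(V)$ implies each vertex $v\in V$ is connected with $g$ by exactly one edge'': for each fixed $g\in G$, bijectivity of $\rho(g)$ yields exactly one label $w\in V$, namely $w=\rho(g)^{-1}(v)$, whose $g$-edge lands on $v$. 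Hence $v$ receives exactly one in-edge from each $g\in G$; equivalently, the disjoint union over the labels $w$ of the sets $\{g\mid \rho(g)(w)=v\}$ (each a coset of a stabiliser, nonempty only for $w$ in the orbit of $v$) is in bijection with $G$ via $g\mapsto\big(\rho(g)^{-1}(v),g\big)$. Since $\deg^+(v)=0$, this gives $\deg(v)=|G|$, also in the infinite case.

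No multiset convention is needed or intended: each $R_i(\G)$ is an honest set of ordered pairs, and the pairs in $R_w(\G)$ are already pairwise distinct because their first coordinates are. Note also that your proposed repair does not actually deliver the claim: even reading $R_v(\G)$ as indexed by $g\in G$, the edge indexed by $g$ is $\big(g,\rho(g)(v)\big)$ and ends at $\rho(g)(v)$, not at $v$, so under that reading $v$ still receives only $|\operatorname{Stab}_G(v)|$ in-edges of label $v$; the remaining in-edges come from the other labels in the orbit of $v$, as above.
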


\begin{proof}
	We begin by proving \emph{(1)}. Fix $g\in G$. First, recall from Remark \ref{remark:CayleyFullSubsystem} that the full binary subsystem with vertex set $G$ is $\G(G) = \Cay(G,S)$. Thus $g$ is the starting (respectively ending) vertex of exactly $|S|$ edges with labels in $S$. Furthermore, for each $v\in V$ there is precisely one edge labeled $v$ starting at $g$, and no more edges start or end in vertices in $G$. Thus, $\deg(g) = 2|S| + |V|$.

	Now take $v\in V$. Then, for each $g\in G$, $\rho(g)\in\Sym(V)$. This implies that each vertex $v\in V$ is connected with $g$ by exactly one edge. As this holds for every $g\in G$, and since there are no other edges starting or ending at $v\in V$, $\deg(v) = |G|$, for all $v\in V$. Thus \emph{(2)} follows.
\end{proof}

We can finally build a simple graph fulfilling the conditions stated at the beginning of this section. We remark that, although the coalgebra $C(\G)$ introduced in Definition \ref{definition:pathCoalgebra} is defined over a digraph, we construct simple graphs here since they provide a more general result, given that any graph can be regarded as a digraph where every edge is bidirected (that is, if $(v,w)$ is an edge in the digraph, $(w,v)$ is an edge as well). 

\begin{theorem}\label{theorem:graphAction}
	Let $G$ be a group, $V$ be a set and $\rho\colon G\to\Sym(V)$ be a permutation representation of $G$ on $V$. There is a (simple, undirected) graph $\G$ such that
	\begin{enumerate}
		\item $V\subset V(\G)$ and each $\psi\in\Aut(\G)$ is invariant on $V$;
		\item $\Aut(\G) \cong G$;
		\item the restriction $G\cong \Aut(\G)\to \Sym(V)$ is precisely $\rho$; and,
		\item there is a faithful action $\bar{\rho}\colon G\cong\Aut(\G)\to \Sym\big(V(\G)\setminus V\big)$ such that the restriction map $\G\cong\Aut(\G)\to\Sym\big(V(\G)\big)$ is $\rho\oplus\bar{\rho}$.
	\end{enumerate}
\end{theorem}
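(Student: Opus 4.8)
The plan is to take the binary $I$-system $\G$ constructed in Theorem~\ref{theorem:mainIRel} and convert it into a simple graph via the arrow replacement operation from \cite[Section 3]{CosMenVir19}, \cite[Section 4.4]{HelNes04}, in such a way that the automorphism group is preserved and the subset $V\subset V(\G)$ continues to be characterised intrinsically (so that it remains invariant under all graph automorphisms). Concretely, for each label $i\in I$ one fixes an asymmetric \emph{connector} graph $A_i$ (a graph with trivial automorphism group, with a distinguished ``source'' and ``sink'' vertex, and with the $A_i$ for distinct $i$ pairwise non-isomorphic as rooted graphs), and replaces every labelled edge $(w,w')\in R_i(\G)$ by a copy of $A_i$ glued at its source to $w$ and at its sink to $w'$. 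Call the resulting simple graph $\G'$; its vertex set contains the original $V(\G) = G\sqcup V$ together with the interior vertices of all the inserted connectors, and we set $V\subset V(\G')$ to be the same copy of $V$ as before.

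\textbf{Key steps, in order.} First I would invoke Lemma~\ref{lemma:degrees} to record the degrees of the ``original'' vertices in $\G$: vertices in $G$ have degree $2|S|+|V|$ and vertices in $V$ have degree $|G|$. This is the input needed to choose the connector graphs $A_i$ with \emph{all} their interior vertices having degrees distinct from $2|S|+|V|$ and from $|G|$ (and with the two endpoint-attachment points themselves arranged so that, after gluing, the endpoints retain recognisably different degrees from the interior vertices); the arrow-replacement machinery of \cite{CosMenVir19,HelNes04} guarantees that such a choice is always possible, uniformly in the (possibly infinite) cardinalities involved. Second, I would quote the main property of the arrow replacement operation: any automorphism of $\G'$ must permute the set of ``skeleton'' vertices (those not interior to a connector) among themselves — because they are distinguished by degree — and must carry each inserted connector copy to another connector copy of the \emph{same} label (because the $A_i$ are asymmetric and pairwise non-isomorphic rooted), respecting source/sink orientation; hence it restricts to an automorphism of the original $I$-system $\G$, and conversely every automorphism of $\G$ extends uniquely to $\G'$. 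This yields a group isomorphism $\Aut(\G')\cong\Aut(\G)\cong G$, establishing~(2). Third, for~(1): the vertices in $V$ are exactly the skeleton vertices of degree $|G|$ that are endpoints only of connectors with labels in $V$ (equivalently, the skeleton vertices that are never a source of a $j$-connector for $j\in J$), an intrinsic condition, so $V$ is invariant under $\Aut(\G')$. Finally,~(3) and~(4) are transported verbatim from Theorem~\ref{theorem:mainIRel} through the isomorphism $\Aut(\G')\cong\Aut(\G)$: the restriction of an automorphism of $\G'$ to $V$ agrees with the restriction of the corresponding automorphism of $\G$, which is $\rho$; and taking $\bar\rho$ to be the induced action on $V(\G')\setminus V$ — which contains the old copy of $G$, on which the action is still faithful by the $e_G\mapsto g$ argument of Lemma~\ref{lemma:restriction} — gives a faithful complement with $\Aut(\G')\to\Sym(V(\G'))$ equal to $\rho\oplus\bar\rho$.

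\textbf{Main obstacle.} The delicate point is the degree bookkeeping in the infinite case: one must be sure that for arbitrary (possibly infinite) cardinals $|S|$, $|V|$, $|G|$ there exist finite asymmetric connector graphs $A_i$ — one isomorphism type for each $j\in J$ and one for each $v\in V$, with the $A_v$ all distinct from each other and from the $A_j$ — whose interior-vertex degrees avoid the two ``forbidden'' values while their endpoints are attached without creating accidental coincidences of degree or accidental symmetries between different connector copies. This is precisely what the arrow replacement results of \cite[Section 3]{CosMenVir19} are designed to handle (they were developed for exactly this kind of realisability argument), so the honest work is to verify the hypotheses of those results apply to $\G$ — which is immediate from Lemma~\ref{lemma:degrees} — rather than to redo the construction. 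Everything else is a routine transport of structure along the isomorphism $\Aut(\G')\cong\Aut(\G)$.
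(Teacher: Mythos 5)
Your proposal is correct and follows essentially the same route as the paper: start from the binary $I$-system of Theorem \ref{theorem:mainIRel}, use Lemma \ref{lemma:degrees} to see that only finitely many vertex degrees occur, apply the arrow replacement operation of \cite[Section 3]{CosMenVir19} to obtain a simple graph with the same automorphism group acting identically on the original vertex set, and transport properties (1)--(4). The paper simply delegates the connector-graph bookkeeping entirely to the cited reference, whereas you spell out more of its mechanics; the substance is the same.
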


\begin{proof}
	Let $\G'$ be the binary $I$-system introduced in Definition \ref{definition:G}. As a consequence of Theorem \ref{theorem:mainIRel}, $\G'$ verifies properties analogous to \emph{(1)--(4)} in the category $\IRel$. We need to translate the solution from $\IRel$ to $\Graphs$.

	By Lemma \ref{lemma:degrees}, the set of possible degrees of vertices in $\G'$ is finite. Thus we can perform a replacement operation, following \cite[Section 3]{CosMenVir19}. In particular, in \cite[Subsection 3.1]{CosMenVir19}, the authors show that there is a (simple, undirected) graph $\G$ such that $V(\G')\subset V(\G)$ and $\Aut(\G) \cong \Aut(\G')$. Moreover, corresponding automorphisms in $\Aut(\G)$ and $\Aut(\G')$ induce the same map on $V(\G')$. In particular, $V\subset V(\G')\subset V(\G)$ is invariant through automorphisms of $\G$ and the restriction $G\cong \Aut(\G)\to \Sym(V)$ is equivalent to the restriction $G\cong \Aut(\G')\to\Sym(V)$ and, thus, it is precisely $\rho$. For the same reason, the automorphism of $\G$ associated to $g\in G$ takes $e_G\in V(\G')\setminus V\subset V(\G)\setminus V$ to $g\in V(\G')$, thus the action $\bar{\rho}\colon G\to V(\G)\setminus V$ is faithful and the restriction map $G\cong\Aut(\G)\to\Sym\big(V(\G)\big)$ is $\rho\oplus\bar{\rho}$. The result follows.
\end{proof}

\section{Permutation representations on coalgebras and the isomorphism problem}\label{section:actions}

In this section, we use the results proved so far to obtain conclusions regarding the representation theory of coalgebras. On the one hand, we show that the permutation representations of a group $G$ can be realised as the restriction of a $G$-action on a coalgebra to a certain subset of its grouplike elements, Theorem \ref{theorem:coalgebraActionRealisation}. On the other hand, we show that faithful coalgebra actions can be used in some cases to distinguish isomorphism classes of groups, Theorem \ref{theorem:isoProblemGrouplike} and Theorem \ref{theorem:isoProblemGeneral}.

Let us start with the realisability of permutation representations. As a consequence of Theorem \ref{theorem:coalgebraExactSeq}, the permutation group induced by the restriction of the automorphisms of $C(\G)$ to its set of grouplike elements is $\Aut(\G)$. We now translate Theorem \ref{theorem:graphAction} to coalgebras, proving Theorem \ref{theorem:coalgebraActionRealisation}.

\begin{proof}[Proof of Theorem \ref{theorem:coalgebraActionRealisation}]
	By Theorem \ref{theorem:graphAction}, there is a simple graph $\G$ such that $V\subset V(\G)$, $\Aut(\G)\cong G$, the restriction $G\cong \Aut(\G)\to \Sym(V)$ is $\rho$ and there is a faithful action $\bar{\rho}\colon G\cong\Aut(\G)\to\Sym\big(V(\G)\setminus V\big)$ such that the restriction map $\Aut(\G)\to\Sym(V)$ is $\rho\oplus\bar{\rho}$. Since any simple graph can be regarded as a digraph where every edge is bidirected, we can consider $C = C(\G)$ the coalgebra introduced in Definition \ref{definition:pathCoalgebra}. Then, $G(C) = V(\G)$. Let us prove that this is the desired coalgebra.

	Recall from Lemma \ref{lemma:familyAut} and Lemma \ref{lemma:computingAut} that the automorphisms of $C$ are the maps $f_{\lambda,\mu}^\sigma$ with $\sigma\in \Aut(\G)$, $\lambda\colon E(\G)\to\Bbbk$ and $\mu\colon E(\G)\to\Bbbk^\times$. Then since $G\cong\Aut(\G)$, $G$ acts on $C$ by taking an element $\sigma\in\Aut(\G)$ to $f_{0,1}^\sigma\in \Aut(C)$, thus $C$ is a $G$-coalgebra.

	On the other hand, for $v\in V(\G) = G(C)$, $f_{\lambda,\mu}^\sigma(v) = \sigma(v)$. Namely, the composition of the inclusion $G\cong\Aut(\G)\hookrightarrow\Aut(C)$ with the restriction $\Aut(C)\to\Sym\big(G(C)\big) = \Sym\big(V(\G)\big)$ is precisely the action of $G$ on $\G$ by automorphisms. The result then follows immediately from Theorem \ref{theorem:graphAction}.
\end{proof}

Finally, we review how we can use the results above to distinguish isomorphism classes of groups through their faithful representations on coalgebras and their restrictions to grouplike elements. Let us recall the concept of co-Hopfian group.

\begin{definition}
	A group $G$ is said to be co-Hopfian if it does not contain proper subgroups isomorphic to itself. Equivalently, every monomorphism $G\to G$ must be an automorphism.
\end{definition}

Clearly, every finite group is co-Hopfian. Other example of co-Hopfian groups are Artin groups, Tarski groups, \cite{Ols80, Ols82}, and fundamental groups of surfaces of genus at least two,  \cite[p. 58]{Har00}. We can now prove Theorem \ref{theorem:isoProblemGrouplike}.

\begin{proof}[Proof of Theorem \ref{theorem:isoProblemGrouplike}]
	One implication is obvious. Let us prove the remaining one. Suppose then that $G$ and $H$ are two groups verifying \emph{(2)}. Let us prove that $G\cong H$.

	Let $\G$ and $\mathcal{H}$ be digraphs such that $\Aut(\G) \cong G$ and $\Aut(\mathcal{H})\cong H$, which exist as a consequence of \cite[Section 6]{Gro59}, \cite{Sab60}. Consider the coalgebras $C(\G)$ and $C(\mathcal{H})$ introduced in Definition \ref{definition:pathCoalgebra}. As a consequence of Theorem \ref{theorem:coalgebraExactSeq}, $G$ acts faithfully on $C(\G)$, and the image of the composition of the inclusion map $G\to\Aut\big(C(\G)\big)$ with the restriction $\Aut\big(C(\G)\big)\to\Sym\big(G(C(\G))\big)$ is $G$. Therefore, there is an action of $G$ on $C(\G)$ that restricts to a faithful action on $G\big(C(\G)\big)$. By \emph{(2)}, this implies that there is an action of $H$ on $C(\G)$ that induces a faithful action on $G\big(C(\G)\big)$, so we deduce that $H\le \Aut(\G)\cong G$. Similarly, if there is an action of $G$ on $C(\mathcal{H})$ inducing a faithful action on $G\big(C(\mathcal{H})\big)$, then $G\le \Aut(\mathcal{H})\cong H$. Thus $G\le H\le G$ and, since $G$ is co-Hopfian, $G\cong H$.
\end{proof}

We now consider the entire action on the coalgebra instead of focusing on its restriction to grouplike elements. To ensure that groups are still distinguished, and since $\Aut\big(C(\G)\big)$ has subgroups of the form $\Bbbk\rtimes\Bbbk^\times$, we have to further restrict the class of groups we are working with. With such objective in mind, we introduce the following class of groups.

\begin{definition}\label{definition:groupClass}
	Let $\Bbbk$ be a finite field of order $p^n$, $p$ prime. A group $G$ is in the class $\mathfrak{G}_{p,n}$ if
	\begin{enumerate}
		\item $G$ is co-Hopfian; and,
		\item $G$ does not have finite non-trivial normal subgroups whose exponent divides $p^n(p^n-1)$.
	\end{enumerate}
\end{definition}

Notice that although this class is quite restrictive, it still contains many interesting groups. For example, $\mathfrak{G}_{2,1}$ still contains all $2$-reduced groups, that is, all groups with no normal $2$-subgroups. We can now prove Theorem \ref{theorem:isoProblemGeneral}.

\begin{proof}[Proof of Theorem \ref{theorem:isoProblemGeneral}]
	One implication is obvious. Let us prove the remaining one. Thus let $G$ and $H$ be two groups in $\mathfrak{G}_{p,n}$ verifying \emph{(2)} and let us prove that $G\cong H$.

	Again, let $\mathcal{G}$ and $\mathcal{H}$ be digraphs such that $\Aut(\G)\cong G$ and $\Aut(\mathcal{H})\cong H$, which exist by \cite[Section 6]{Gro59}, and consider $C(\G)$ and $C(\mathcal{H})$ the respective coalgebras from Definition \ref{definition:pathCoalgebra}. Then $G\cong \Aut(\G)$ acts faithfully on $C(\G)$ as an immediate consequence of Corollary \ref{corollary:autC(G)}. By the same result, if $H$ acts faithfully on $C(\G)$, there is a group monomorphism
	\[H\hookrightarrow \Aut\big(C(\G)\big)\cong \left(\prod_{e\in E(\G)}(\Bbbk\rtimes \Bbbk^\times)\right)\rtimes G.\]
	Thus $H$ is isomorphic to a subgroup of $\Aut\big(C(\G)\big)$, which we also denote by $H$. We shall see that $H\cap\big(\prod_{e\in E(\G)}(\Bbbk\rtimes\Bbbk^\times)\big)=\{1\}$.

	First notice that $\prod_{e\in E(\G)}(\Bbbk\rtimes\Bbbk^\times)$ is normal in $\Aut\big(C(\G)\big)$, thus $H\cap\big(\prod_{e\in E(\G)}(\Bbbk\rtimes\Bbbk^\times)\big)$ is normal in $H$. On the other hand, $\Bbbk\rtimes\Bbbk^\times$ is a group of order $p^n(p^n-1)$, thus the exponent of $\prod_{e\in E(\G)}(\Bbbk\rtimes\Bbbk^\times)$ divides $p^n(p^n-1)$. Therefore $H\cap\big(\prod_{e\in E(\G)}(\Bbbk\rtimes\Bbbk^\times)\big)$ is a normal subgroup of $H$ whose exponent divides $p^n(p^n-1)$. Hence, since $H$ is in $\mathfrak{G}_{p,n}$, the intersection must be the trivial group. Consequently, the image of $H$ falls in $G$, so $H\le G$.

	By a similar argument, we deduce that if $G$ acts faithfully on $C(\mathcal{H})$, then $G\le H$. We then have $G\le H\le G$ and, since $G$ is co-Hopfian, $G\cong H$.
\end{proof}


\providecommand{\bysame}{\leavevmode\hbox to3em{\hrulefill}\thinspace}
\providecommand{\MR}{\relax\ifhmode\unskip\space\fi MR }
\providecommand{\MRhref}[2]{%
  \href{http://www.ams.org/mathscinet-getitem?mr=#1}{#2}
}
\providecommand{\href}[2]{#2}

\end{document}